\titleformat{\subsection}{\it}{\thesubsection.\enspace}{1pt}{}
\newtheorem{theo}{Theorem}[section]
\newtheorem{lemm}[theo]{Lemma}
\newtheorem{defi}[theo]{Definition}
\newtheorem{prop}[theo]{Proposition}
\newtheorem{rema}[theo]{Remark}
\newtheorem{exam}{Example}
\numberwithin{equation}{section}
\newcommand\ep{{\varepsilon}} %可定义一个简单符号来代替很长而常用的命令。
\begin{document}
\title{The well-posedness and blow up phenomenon for a Tsunamis model with time-fractional derivative
\hspace{-4mm}
}

\author{Bingbing $\mbox{Dai}^1$ \footnote{Email: daibb3@mail2.sysu.edu.cn},\quad
Wei $\mbox{Luo}^1$ \footnote{E-mail:  luowei23@mail2.sysu.edu.cn}, \quad
	Zhaoyang $\mbox{Yin}^{1,2}$\footnote{E-mail: mcsyzy@mail.sysu.edu.cn} \quad and
	Pei $\mbox{Zheng}^{1}$\footnote{E-mail: zhengp25@mail2.sysu.edu.cn}\\
	$^1\mbox{Department}$ of Mathematics,
	Sun Yat-sen University, Guangzhou 510275, China\\
	$^2\mbox{School}$ of Science,\\ Shenzhen Campus of Sun Yat-sen University, Shenzhen 518107, China}

\date{}
\maketitle
\hrule

\begin{abstract}
 This paper is concerned with the well-posedness of a time-fractional shallow-water equations, which has received little attention. In the realm of fractional calculus, numerous types of fractional derivatives have been explored in the literature.  Among these, one of the most notable and well-structured ones is the conformable fractional derivative. In this paper, we delve into the local well-posedness of the fractional tsunami shallow-water mathematical model in the critical Besov space $B^{\frac{3}{2}}_{2,1}$.  Under some symmetric and sign conditions, we show that the strong solution will blow up in finite time. \\
\vspace*{5pt}
\noindent {\it 2010 Mathematics Subject Classification}: 35Q35, 35L60, 35B44, 35D35.

\vspace*{5pt}
\noindent{\it Keywords}: Time-fractional derivative; Local well-posedness; Critical Besov spaces; Blow up
\end{abstract}

\vspace*{10pt}

%\phantomsection
%\addcontentsline{toc}{section}{\contentsname}
%添加目录到书签
\tableofcontents

\section{Introduction}

In this paper, we study the Cauchy problem of the time-fractional tsunami couple system(TFTCS)\cite{K-2024}:
     \begin{equation}\label{1.1}
     	\left\{
     	\begin{aligned}
     		&D^{\beta}_{t}u+u\partial_{x}u+g\partial_{x}\psi=0,\\
     		&D^{\beta}_{t}\psi+\partial_{x}[(\theta+\psi)u]=0, \\
     &u(x,0)=u_0, ~~~\psi(x,0)=\psi_0,
     	\end{aligned}
     	\right.
     \end{equation}
where $u(x,t)$ denotes the velocity of the water wave, $\psi(x,t)$ is referred to as the wave magnification factor, $\theta(x)$ is the ocean depth which is considered as a variable, $x$ refer to the distance, $t$ is the time variable ($t>0$), $g$ is the acceleration of gravity, and $\beta$ is the order of the fractional derivative ($0<\beta\le1$).

If $g=0$ and $\beta=1$, the first equation of \eqref{1.1} is the famous Burgers equation. The Burgers equation is the basic example of a nonlinear evolution equation leading to
shocks\cite{R-1970}.  A. Kiselev, F. Nazarov and  R. Shterenberg\cite{K-2008} studied about the regularity for fractal Burgers equation. C. Miao and G. Wu\cite{M-2009} established the well-posedness of the Burgers equation in critical Besov spaces $B^{1+\frac{1}{p}}_{p,1}$ with $p\in[1,+\infty)$.  L. Molinet, D. Pilodb and S. Vento\cite{M-2018} considered the well-posedness of some dispersive perturbations of the Burgers equation in $H^s$ with $s>\frac{3}{2}$. Recently, J. Li, Y. Yu and  W. Zhu\cite{L-2024} showed that the Burgers equation is ill-posedness in Sobolev spaces $H^s$ with $1\leq s<\frac{3}{2}$.

In the case $\beta=1$, the system \eqref{1.1} is a standard form of the hyperbolic conservation laws which has been studied widely. The classical result was obtained by P. Lax\cite{L-1957}. P. Lax considered the weak solution of a hyperbolic system and generalized the entropy condition and developed a theory of shocks. The stability of shock waves was proved by T.-P. Liu and Z. Xin\cite{LX-1992}. For the general well-posedness theory, we refer to \cite{K-1975,LY-1999}.  J. Xu and S. Kawashima\cite{X-2014} studied about the local well-posedness for the hyperbolic balance laws in the critical Besov space $B^{\frac{d}{2}+1}_{2,1}$.

Tsunamis are a natural phenomenon, characterized by their long wavelengths and immense destructive power. Researchers have indeed delved into various mathematical models to better understand and predict the behavior of tsunami waves. While much of the focus has traditionally been on integer-order models, there's been a growing interest in exploring fractional-order models as well. Integer-order models typically describe tsunami propagation using systems of partial differential equations that represent the dynamics of water motion and the interaction between the waves and the ocean floor. These models often incorporate principles of fluid dynamics, such as the shallow water equations, to simulate the evolution of tsunami waves over space and time\cite{KC-2017,MM-2018,MF-2022}. However, fractional-order models offer a more nuanced approach by considering derivatives of non-integer order, which can capture more accurately the complex behavior of tsunami waves, particularly in regions with variable bathymetry or irregular coastlines. These models have the potential to provide deeper insights into phenomena like wave dispersion, dissipation, and nonlinear interactions, which are crucial for improving tsunami forecasting and risk assessment.

\subsection{Fractional calculus}

Fractional calculus is a fascinating field within mathematical analysis that delves into the realm of noninteger order powers of differentiation and integration operators. Compared to integer-order differential equations, fractional differential equations offer significant opportunities in recent times due to the remarkable and insightful outcomes achieved by employing fractional operators to model real-world phenomena across a multitude of disciplines such as physics, chemistry, biology, engineering, and beyond\cite{A-2013,B-2016,C-1997,M-1996}. This growing importance and popularity stem from its ability to provide nuanced insights and solutions to complex problems\cite{P-1999,S-1993}, making it an indispensable tool for researchers and practitioners alike. There are various definitions of fractional derivatives. Two of which are the most popular ones. \\
(i)  Riemann-Liouville derivatives\cite{M-1993, S-1993}. For $\beta\in(0,1]$, the $\beta$ derivative of $f$ is
$$ D^\beta_af(t):=\frac{1}{\Gamma(\beta)}\frac{d}{dt}\int^t_a\frac{f(x)}{(t-x)^{\beta}}dx .$$
(ii) Caputo derivatives\cite{K-2006, P-1999}. For $\beta\in(0,1]$, the $\beta$ derivative of $f$ is
$$ D^\beta_af(t):=\frac{1}{\Gamma(\beta)}\int^t_a\frac{f'(x)}{(t-x)^{\beta}}dx .$$
Recently, there are a lot of papers studied about the time-fractional parabolic partial differential equations\cite{A-2016,A-2019, M-2024,W-2024,G-2024,L-2022,H-2022}. The key idea of them was to use the fixed point theory to establish the existence and uniqueness in suitable Banach spaces. However, this method was invalid for the hyperbolic type system. The fractional derivatives defined by the above way do not obey Leibniz's law and the chain rule. It is hard to establish the well-posedness for a hyperbolic system with Riemann-Liouville derivatives or Caputo derivatives.

In order to study \eqref{1.1}, we consider a well-behaved simple fractional derivative depending only on the basic limit definition of the derivative\cite{K-2014}.
\begin{defi}[The conformable fractional derivative\cite{K-2014}]\label{Def1}
For a function $f:[0,\infty)\to \mathbb{R}$ the $\beta$ derivative of $f$ at $t>0$ was defined by
\begin{equation}
     	D^{\beta}_{t}f(t)=\lim\limits_{\varepsilon\to0}\frac{f(t+\varepsilon t^{1-\beta})-f(t)}{\varepsilon},
\end{equation}
where $0<\beta\le1$. If $f$ is $\beta-$differentialbe in $(0, a), a>a$ and  $\lim\limits_{t\to0}(D^{\beta}_{t}f)(t)$ exists, then the fractional derivative at $0$ is defined as $D^{\beta}_{t}f(0)=\lim\limits_{t\to0}(D^{\beta}_{t}f)(t)$.
\end{defi}
It is easy to check that the conformable fractional derivative satisfy the known formula of the derivative of the product of two functions\cite{A-2015}:
    \begin{equation}
    	D^{\beta}_{t}(fg)=fD^{\beta}_{t}g+gD^{\beta}_{t}f.
    \end{equation}
In the following sequel, the notation $D^{\beta}_{t}$ was defined by the conformable fractional derivative.
\subsection{Main results}
Without loss of generality, we can take $g=1$. To the best of our knowledge, the mathematical analysis result for Eqs.\eqref{1.1} has not yet been studied. Firstly we need to transfer Eqs.\eqref{1.1} to a quasilinear symmetric system. For this purpose, we introduce the new un-know function $v=2\sqrt{\psi+\theta}$, then it can be seen that $(u,v)$ satisfies
     \begin{equation}\label{1.3}
     	\left\{
     	\begin{aligned}
     		&D^{\beta}_{t}u+u\partial_{x}u+\frac{1}{2}v\partial_{x}v=\partial_{x}\theta,\\
     		&D^{\beta}_{t}v+\frac{1}{2}v\partial_{x}u+u\partial_{x}v=0,\\
     		&u(x,0)=u_0(x), ~~~v(x,0)=2\sqrt{\psi_0(x)+\theta(x)}.
     	\end{aligned}
     	\right.
     \end{equation}
     Moreover, we write Eqs.\eqref{1.3} as follows:
     \begin{equation}\label{1.4}
     	\left\{
     	\begin{aligned}
     		&D^{\beta}_{t}U+A(U)\partial_{x}U=M(x),\\
     		&U(x,0)=U_0,
     	\end{aligned}
     	\right.
     \end{equation}
     where $U=\left(
     \begin{matrix}u\\
     	v\end{matrix}
     \right)$, $A(U)=A(u,v)=\left(
     \begin{matrix}u &\frac{1}{2}v\\
     	\frac{1}{2}v &u\end{matrix}
     \right)$, $M(x)=\left(
     \begin{matrix}\partial_x\theta\\
     	0\end{matrix}
     \right)$ and $U_0=\left(
     \begin{matrix}u_0(x)\\
     	2\sqrt{\psi_0(x)+\theta(x)}\end{matrix}
     \right)$.

   It is worth mentioning that the novelty of this study is to solve the system with time-fractional derivative and obtain the existence and uniqueness of the solution in critical Besov space.

   Our main theorems can be stated as follows:
   \begin{theo}[Local well-posedness]\label{them1}
   Let $U_0$ be in $(B^{\frac{3}{2}}_{2,1})^2$, $\theta\in B^{\frac{3}{2}}_{2,1}$ and $\beta=1$. Then there exists a time $T=T(U_0,\theta)>0$ such that \eqref{1.4} has a unique solution $U$ in $\bigg(\mathcal{C}([0,T],B^{\frac{3}{2}}_{2,1})\cap\mathcal{C}^1([0,T],B^{\frac{1}{2}}_{2,1})\bigg)^2$. Moreover, the solution depends continuously on the initial data.
   \end{theo}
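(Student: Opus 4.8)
The plan is to exploit that at $\beta=1$ the conformable derivative $D^\beta_t$ reduces to the ordinary time derivative, so that \eqref{1.4} is the genuine first-order quasilinear system $\partial_t U+A(U)\partial_x U=M(x)$ with $A(U)$ symmetric. Since $d=1$ and $s=\frac32=\frac d2+1$, I would first record the functional toolbox: the embeddings $B^{\frac12}_{2,1}\hookrightarrow L^\infty$ and $B^{\frac32}_{2,1}\hookrightarrow W^{1,\infty}$, the fact that $B^{\frac32}_{2,1}$ is a Banach algebra, the product and composition estimates controlling $\|A(U)\|_{B^{3/2}_{2,1}}$ and $\|\partial_x A(U)\|_{L^\infty}\lesssim\|U\|_{B^{3/2}_{2,1}}$, and the Besov commutator estimate $\sum_j 2^{3j/2}\|[A(U),\Delta_j]\partial_x U\|_{L^2}\lesssim\|\partial_x A(U)\|_{B^{1/2}_{2,1}}\|U\|_{B^{3/2}_{2,1}}$. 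With these the argument follows the classical Friedrichs--Picard scheme for symmetric hyperbolic systems, in the spirit of the work of Xu and Kawashima cited above.

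\textbf{Iteration and uniform bounds.} Setting $U^0:=S_0U_0$, define $U^{n+1}$ as the solution of the \emph{linear} symmetric hyperbolic problem $\partial_t U^{n+1}+A(U^n)\partial_x U^{n+1}=M$ with $U^{n+1}(0)=U_0$, solvable at each step by linear theory. Applying $\Delta_j$, pairing in $L^2$ with $\Delta_j U^{n+1}$, and using the symmetry of $A(U^n)$ to integrate by parts, the principal term is bounded by $\|\partial_x A(U^n)\|_{L^\infty}\|\Delta_j U^{n+1}\|_{L^2}^2$ while the commutator $[A(U^n),\Delta_j]\partial_x U^{n+1}$ is absorbed by the estimate above. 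Multiplying by $2^{3j/2}$, summing in $j$, and applying Gronwall yields
$$\|U^{n+1}(t)\|_{B^{3/2}_{2,1}}\le\Big(\|U_0\|_{B^{3/2}_{2,1}}+t\|M\|_{B^{3/2}_{2,1}}\Big)\exp\Big(C\!\int_0^t\|U^n(\tau)\|_{B^{3/2}_{2,1}}\,d\tau\Big).$$
A bootstrap on this inequality produces a time $T=T(\|U_0\|,\|\theta\|)>0$ and a radius $R$ with $\sup_n\|U^n\|_{L^\infty_TB^{3/2}_{2,1}}\le R$.

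\textbf{Convergence, existence, uniqueness.} Differencing the equations for $U^{n+1}$ and $U^n$ gives a linear symmetric system for $\delta^n:=U^{n+1}-U^n$ forced by $(A(U^n)-A(U^{n-1}))\partial_x U^n$; estimating $\delta^n$ one level below, in $B^{1/2}_{2,1}$, and using the uniform bounds shows $(U^n)$ is Cauchy in $\mathcal C([0,T],B^{1/2}_{2,1})$. Interpolating against the uniform $B^{3/2}_{2,1}$ bound gives convergence in every $B^{s}_{2,1}$ with $s<\frac32$, enough to pass to the limit in the nonlinear terms and obtain $U\in L^\infty([0,T],B^{3/2}_{2,1})$. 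Rewriting $\partial_t U=M-A(U)\partial_x U$, whose right side lies in $B^{1/2}_{2,1}$ by the algebra property, yields $U\in\mathcal C^1([0,T],B^{1/2}_{2,1})$; the top-level continuity $U\in\mathcal C([0,T],B^{3/2}_{2,1})$ then follows from the $\ell^1$ summability of the Littlewood--Paley blocks together with the a priori bound, which is precisely why the third index is taken to be $1$. Uniqueness is again the $B^{1/2}_{2,1}$ difference estimate followed by Gronwall.

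\textbf{Continuous dependence (the main obstacle).} The delicate point, typical of critical regularity, is continuity of the data-to-solution map in the strong topology: the difference estimate only closes at regularity $\frac12$ and so does not directly give continuity in $B^{3/2}_{2,1}$. I would handle this by a Bona--Smith argument, regularizing the data by frequency truncation $S_NU_0$, controlling the smooth solutions uniformly via the bounds above, and combining a quantitative low-norm convergence rate with a uniform high-norm modulus of continuity to upgrade to convergence in $B^{3/2}_{2,1}$. I expect the bulk of the technical effort to sit here, and in checking that the product, composition and commutator estimates all close exactly at the critical index $s=\frac32$.
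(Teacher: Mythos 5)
Your proposal is correct and, for the existence part, follows essentially the same route as the paper: a Friedrichs--Picard iteration on the linear symmetric system $\partial_tU^{n+1}+A(U^n)\partial_xU^{n+1}=M$, Littlewood--Paley energy estimates with the commutator/remainder term controlled at the critical index $s=\tfrac32=\tfrac d2+1$, Gronwall and a bootstrap for the uniform bound, a low-norm Cauchy estimate (the paper works in $L^\infty_T(L^2)$ with data $S_{n+1}U_0$ so that the telescoping term $\|\Delta_nU_0\|_{L^2}$ appears, you work in $B^{1/2}_{2,1}$ with fixed data --- both close), interpolation plus the Fatou property, recovery of $\mathcal C([0,T];B^{3/2}_{2,1})$ from the $\ell^1$ summability of the dyadic blocks, and uniqueness by the same low-norm difference estimate. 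The one place where you genuinely diverge is continuous dependence, which you correctly identify as the delicate step. You propose a Bona--Smith scheme (regularize the data by $S_N$, combine a quantitative low-norm rate with a uniform high-norm modulus of continuity). The paper instead differentiates the equation, sets $\overline{U}^n=\partial_xU^n$, and splits $\overline{U}^n=W^n+Z^n$, where $W^n$ carries the limiting forcing $F^\infty$ with the varying coefficient $A(U^n)$ and $Z^n$ carries the differences of data and forcing; the convergence $W^n\to W^\infty$ is supplied by a separate lemma on continuity of linear symmetric systems with respect to their coefficients (Lemma \ref{2continuity}, itself proved by a frequency-truncation argument), and $Z^n\to0$ follows from Gronwall. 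Both strategies are standard at critical regularity and yield the same conclusion; the paper's version localizes the regularization inside the linear theory, while yours regularizes the data of the nonlinear problem, at the cost of having to track the dependence of the lifespan and the high-norm bounds on the regularization parameter.
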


\begin{theo}[Local well-posedness for $0<\beta<1$]\label{them2}
	Let $U_0$ be in $(B^{\frac{3}{2}}_{2,1})^2$, $\theta\in B^{\frac{3}{2}}_{2,1}$ and $0<\beta<1$. Then there exists a time $T=T(\beta, U_0, \theta)>0$ such that \eqref{1.4} has a unique solution $U$ in $\bigg(\mathcal{C}([0,T],B^{\frac{3}{2}}_{2,1})\cap\mathcal{C}^\beta([0,T],B^{\frac{1}{2}}_{2,1})\bigg)^2$.
\end{theo}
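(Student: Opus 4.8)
The plan is to reduce Theorem \ref{them2} to the already-established case $\beta=1$ (Theorem \ref{them1}) by exploiting the elementary structure of the conformable derivative. The starting point is the observation that for any $f$ differentiable at $t>0$ the limit in Definition \ref{Def1} collapses to $D^\beta_t f(t)=t^{1-\beta}f'(t)$ (substitute $h=\ep\,t^{1-\beta}$ in the difference quotient). Applying this entrywise to $U$, the system \eqref{1.4} is equivalent, for $t>0$, to the genuine integer-order quasilinear system $\partial_t U = t^{\beta-1}\big(M(x)-A(U)\partial_x U\big)$, in which the only new feature compared with the $\beta=1$ case is the singular-in-time weight $t^{\beta-1}$.

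First I would remove this weight by the change of time variable $s=t^\beta/\beta$. Setting $V(x,s):=U(x,t)$ with $t=(\beta s)^{1/\beta}$ and using $ds=t^{\beta-1}\,dt$, a direct computation gives $\partial_s V = t^{1-\beta}\partial_t U = M(x)-A(V)\partial_x V$, so that $V$ solves exactly the system treated in Theorem \ref{them1}:
\begin{equation*}
\partial_s V + A(V)\partial_x V = M(x), \qquad V(x,0)=U_0.
\end{equation*}
Since the substitution acts only on the time variable and leaves $x$ untouched, it preserves the spatial Besov regularity and the hypotheses $U_0\in (B^{3/2}_{2,1})^2$, $\theta\in B^{3/2}_{2,1}$ are unchanged. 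Theorem \ref{them1} then furnishes an $S>0$ and a unique solution $V\in\big(\mathcal C([0,S],B^{3/2}_{2,1})\cap\mathcal C^1([0,S],B^{1/2}_{2,1})\big)^2$.

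Next I would transport this solution back. Because $s=t^\beta/\beta$ is a continuous increasing bijection of $[0,T]$ onto $[0,S]$ with $T=(\beta S)^{1/\beta}$, the function $U(x,t):=V(x,t^\beta/\beta)$ inherits $U\in\big(\mathcal C([0,T],B^{3/2}_{2,1})\big)^2$. For the time regularity, continuity of $\partial_s V$ in $B^{1/2}_{2,1}$ gives the Lipschitz bound $\|V(s_1)-V(s_2)\|_{B^{1/2}_{2,1}}\lm|s_1-s_2|$ on the compact interval $[0,S]$; combined with the elementary inequality $|t_1^\beta-t_2^\beta|\le|t_1-t_2|^\beta$ valid for $0<\beta\le1$, this yields $\|U(t_1)-U(t_2)\|_{B^{1/2}_{2,1}}\lm|t_1-t_2|^\beta$, i.e. $U\in\big(\mathcal C^\beta([0,T],B^{1/2}_{2,1})\big)^2$. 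Uniqueness transfers immediately: any solution of \eqref{1.4} on $[0,T]$ produces, through the same substitution, a solution of the $\beta=1$ system on $[0,S]$, which is unique by Theorem \ref{them1}.

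The one point requiring genuine care, and the main obstacle, is the behaviour at $t=0$, where the weight $t^{\beta-1}$ blows up and the substitution $t\mapsto s$ is only $\beta$-Hölder rather than $C^1$. For $t>0$ the equivalence above is a smooth change of variables and poses no difficulty, but I must still verify that the constructed $U$ genuinely solves the conformable system in the sense of Definition \ref{Def1}, including the value at $t=0$. The key is that $D^\beta_t U(t)=t^{1-\beta}\partial_t U(t)=\partial_s V\big(t^\beta/\beta\big)$ for $t>0$, and since $\partial_s V$ extends continuously to $s=0$ (by $V\in\mathcal C^1_s$) with $\partial_s V(0)=M-A(U_0)\partial_x U_0\in B^{1/2}_{2,1}$, the limit $\lim_{t\to 0}D^\beta_t U(t)$ exists in $B^{1/2}_{2,1}$. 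This simultaneously validates the equation at $t=0$ through the prescription of Definition \ref{Def1} and shows $D^\beta_t U\in\mathcal C([0,T],B^{1/2}_{2,1})$, so that $U$ is a bona fide solution on the whole interval $[0,T]$.
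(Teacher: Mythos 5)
Your proposal is correct, but it takes a genuinely different route from the paper. The paper re-runs the entire iteration scheme of Section 3 with $D^{\beta}_{t}$ in place of $\partial_t$: it constructs the approximate solutions \eqref{4.1}, derives the uniform bound by applying the fractional integral $I^0_{\beta}$ of Lemma \ref{fractional} to the frequency-localized energy inequality (which is where the weight $\tau^{\beta-1}$ and the lifespan $T\approx\big[\beta/(\|U_0\|_{B^{3/2}_{2,1}}+\|\theta\|_{B^{3/2}_{2,1}})\big]^{1/\beta}$ enter), and then repeats the Cauchy-sequence and Fatou arguments. You instead use $D^{\beta}_{t}f(t)=t^{1-\beta}f'(t)$ for $t>0$ and absorb the weight by the substitution $s=t^{\beta}/\beta$, reducing the whole statement to a single application of Theorem \ref{them1}; the $\mathcal{C}^{\beta}$ time regularity then follows from the Lipschitz bound for $V$ in $s$ together with $|t_1^{\beta}-t_2^{\beta}|\le|t_1-t_2|^{\beta}$. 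This is shorter, recovers the same lifespan $T=(\beta S)^{1/\beta}$ (consistent with the paper's remark after its proof), and treats the only delicate point --- the degeneracy of the substitution at $t=0$ --- correctly via the limit prescription in Definition \ref{Def1}. The one step deserving an explicit sentence is uniqueness: a competing solution is a priori only $\mathcal{C}^{\beta}$ in time, so you should record that pointwise existence of $D^{\beta}_{t}\widetilde U(t)$ for $t>0$ already forces differentiability of $\widetilde U$ there (the two difference quotients are related by the bijection $h=\ep t^{1-\beta}$), after which the transported function solves the $\beta=1$ system on $(0,S]$ and the $L^2$ energy/Gronwall argument of Step 5 extends by continuity up to $s=0$. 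What the paper's direct approach buys in exchange for its length is a self-contained set of fractional energy estimates in the original time variable, directly reusable for quantitative bounds on the solution; what yours buys is economy and a transparent explanation of where the $\beta$-dependence of the lifespan comes from.
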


\begin{rema}
If the initial data and the function $\theta$ belong to $H^s$ with $s>\frac{3}{2}$, we can also obtain that the solution belongs to $\bigg(\mathcal{C}([0,T],H^s)\cap\mathcal{C}^\beta([0,T],H^{s-1})\bigg)^2$. The proof is similar to theorem \ref{them2}, we omit the details here.
\end{rema}

\begin{theo}[Blow up phenomenon]\label{them3}
Let $(u_0,\psi_0)\in (H^3)^2$ and $\theta\in H^3$. Assume that $u_0$ is odd, $\psi_0, \theta$ are even and $$\psi_0(0)=\theta(0)=0,\psi''_0(0)\leq 0, \theta''(0)\geq 0, u'_0(0)<0.$$
Then the corresponding solution of \eqref{1.1} will blow up in finite time.
\end{theo}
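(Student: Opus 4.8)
The plan is to turn the PDE blow-up into an ODE blow-up at the origin, using the imposed parity. First I would verify that the symmetry class is invariant: since $u_0$ is odd and $\psi_0,\theta$ are even, the reflection $x\mapsto -x$ carries a solution of \eqref{1.1} to another solution with the same data, so the uniqueness granted by the local theory (Theorem \ref{them1} and the $H^s$ version noted after Theorem \ref{them2}) forces $u(\cdot,t)$ to stay odd and $\psi(\cdot,t)$ to stay even on the maximal interval. Hence $u(0,t)=0$, $\partial_x\psi(0,t)=0$ and $\partial_x\theta(0)=0$ for all $t$. Evaluating the second equation of \eqref{1.1} at $x=0$ gives $D^\beta_t\psi(0,t)=-(\theta(0)+\psi(0,t))\,\partial_x u(0,t)$, a linear conformable ODE with $\psi(0,0)=0$; since $\theta(0)=0$ this yields $\psi(0,t)\equiv 0$, so the origin remains a vacuum point of the water depth $\psi+\theta$.

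Next I would differentiate the system in $x$ and evaluate at the origin, where the symmetry annihilates most terms. Writing $m(t)=\partial_x u(0,t)$ and $n(t)=\partial_x^2\psi(0,t)$, differentiating the first equation once and the second equation twice gives the closed conformable system
\begin{equation*}
D^\beta_t m=-m^2-n,\qquad D^\beta_t n=-3\,m\,\bigl(n+\theta''(0)\bigr),
\end{equation*}
with $m(0)=u_0'(0)<0$, $n(0)=\psi_0''(0)\le 0$ and the constant $\theta''(0)=\partial_x^2\theta(0)\ge 0$. Because the conformable derivative satisfies $D^\beta_t f=t^{1-\beta}f'(t)$ for $t>0$, the time change $\tau=t^\beta/\beta$ converts $D^\beta_t$ into $d/d\tau$, so the fractional and classical ($\beta=1$) cases are handled uniformly: a singularity at finite $\tau^\ast$ produces a singularity at $t^\ast=(\beta\tau^\ast)^{1/\beta}$.

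The engine is then a Riccati comparison for $m$. Setting $N=n+\theta''(0)$, the second equation becomes $D^\beta_t N=-3mN$, so $N$ retains the sign of $N(0)$; as the origin is an interior minimum of the nonnegative depth $\psi+\theta$ one has $N(0)=\psi_0''(0)+\theta''(0)\ge 0$, whence $N(t)\ge 0$ and therefore $n(t)\ge -\theta''(0)$ throughout. Moreover, while $m<0$ we have $D^\beta_t n=-3mN\ge 0$, so $n$ is nondecreasing and $n(t)\ge n(0)$. Feeding these into the first equation produces the scalar differential inequality $D^\beta_t m\le -m^2-n(0)=-m^2+|\psi_0''(0)|$; comparing with the pure Riccati flow $D^\beta_t\mu=-\mu^2+|\psi_0''(0)|$, $\mu(0)=u_0'(0)$, drives $m(\tau)\to-\infty$ at a finite $\tau^\ast$. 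Finally, the embedding $H^3(\mathbb{R})\hookrightarrow C^2(\mathbb{R})$ gives $|m(t)|\le C\|u(t)\|_{H^3}$, so the divergence of $m$ forces $\|u(t)\|_{H^3}\to\infty$, which by the continuation criterion attached to the local theory means the solution cannot be extended past $t^\ast$.

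The main obstacle is precisely the control of the coupling term $n=\partial_x^2\psi(0,t)$: the equation for $m$ is not self-contained, and at $t=0$ the forcing $-n$ carries the unfavourable sign $-n(0)=|\psi_0''(0)|\ge 0$. The delicate step is to show that the preserved constraint $N\ge 0$, together with the monotonicity $n(t)\ge n(0)$ as long as $m<0$, confines the forcing tightly enough to keep the scalar mechanism $D^\beta_t m\le -m^2+|\psi_0''(0)|$ operative up to the blow-up time; equivalently, one must rule out the competing scenario in which $(m,n)$ is captured by the saddle-type equilibrium $(m,n)=(0,0)$ instead of escaping to $m=-\infty$. It is in excluding this alternative that the three sign hypotheses on $u_0'(0)$, $\psi_0''(0)$ and $\theta''(0)$ must be used together, rather than in the Riccati blow-up itself.
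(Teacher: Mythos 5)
Your overall strategy coincides with the paper's: propagate the parity of $(u,\psi)$ by uniqueness, deduce $\psi(0,t)\equiv 0$, reduce to an ODE system at the origin for $m(t)=u_x(0,t)$ and $n(t)=\psi_{xx}(0,t)$, and conclude by a Riccati argument after converting $D^\beta_t$ to an ordinary derivative via $\tau=t^\beta/\beta$. Your bookkeeping of the second-order equation (the coefficient $3$ in $D^\beta_t n=-3m\,(n+\theta''(0))$, and the remark that $N=n+\theta''(0)$ solves a linear equation and hence preserves its sign) is in fact more careful than the paper's.

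Nevertheless the argument is not complete, and you say so yourself. Two concrete gaps. First, the inequality $N(0)=\psi_0''(0)+\theta''(0)\ge 0$ is justified by nonnegativity of the depth $\psi_0+\theta$ near the origin; this is not among the hypotheses of the theorem, which only give $\psi_0''(0)\le 0$ and $\theta''(0)\ge 0$ and are compatible with either sign of $N(0)$, so it must be added as an assumption or derived. Second, and decisively, the differential inequality you reach, $D^\beta_t m\le -m^2+|\psi_0''(0)|$, does not force $m\to-\infty$ under the stated hypothesis $u_0'(0)<0$: in the variable $\tau$ the comparison flow $\mu'=-\mu^2+a$ with $a=|\psi_0''(0)|$ sends every initial value in $(-\sqrt{a},\sqrt{a}\,)$ monotonically to the equilibrium $\sqrt{a}$, so blow-up to $-\infty$ requires $u_0'(0)<-\sqrt{|\psi_0''(0)|}$, strictly stronger than what is assumed. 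You flag exactly this obstruction (``rule out the competing scenario'') but do not resolve it, so as written your argument proves the theorem only under an extra largeness condition on $|u_0'(0)|$, or in the special case $\psi_0''(0)=0$. The paper closes this step differently: it propagates a sign for $\psi_{xx}(0,t)$ itself on a short time interval and inserts that sign directly into $D^\beta_t u_x(0,t)=-u_x^2(0,t)-g\,\psi_{xx}(0,t)$ to obtain the pure Riccati inequality $D^\beta_t u_x(0,t)\le -u_x^2(0,t)$ with no zeroth-order forcing. To repair your proof you need a statement of that strength --- a pointwise sign for $n(t)$ making $-g\,n(t)\le 0$, not merely the lower bound $n(t)\ge n(0)$ --- and in doing so you should check which sign of $\psi_{xx}(0,t)$ is actually the favourable one in that identity, since the favourable sign is $\psi_{xx}(0,t)\ge 0$ while the hypothesis reads $\psi_0''(0)\le 0$; this tension is present in the paper's own argument as well and deserves scrutiny.
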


 This paper is aimed to establish the locally well-posedness in the critical Besov space $B^{\frac{3}{2}}_{2,1}$. The content of this paper is the following. In Section 2, we recall some basic definitions and the related results about Besov spaces. In Section 3, we prove the local well-posedness of \eqref{1.4} in critical Besov space $B^{\frac{3}{2}}_{2,1}$ with $\beta=1$ and present the blow-up criterion. In Section 4, we consider the case $0<\beta<1$. Section 5 devote to investigating the blow up phenomenon.
\section{Preliminaries}
  ~~~~In this section, we will recall the theory about Besov spaces and some properties about fractional calculus, which will be used in the sequel. Firstly, we state Bernstein's inequalities.
  \begin{prop}\cite{B-2011}\label{Bernstein}
  	Let $\mathscr{C}$ be an annulus and $\mathscr{B}$ be a ball. A constant $C\textgreater0$ exists such that for any $k\in\mathbb{N}$, $1\leq p\leq q\leq \infty$, and any function $f\in L^p(\mathbb{R})$, we have \\
  	$$Supp(\mathscr {F}f) \subset \lambda\mathscr{B} \Longrightarrow \|D^k f\|_{L^p}=\sup\limits_{|\alpha|=k}\|\partial^{\alpha}f\|_{L^q} \leq C^{k+1}{\lambda}^{k+d(\frac{1}{p}-\frac{1}{q})}\|f\|_{L^p}$$
  		$$Supp(\mathscr {F}f) \subset \lambda\mathscr{C} \Longrightarrow  C^{-k-1}{\lambda}^{k}\|f\|_{L^p} \leq \|D^k f\|_{L^p} \leq C^{k+1}{\lambda}^{k}\|f\|_{L^p}$$
  \end{prop}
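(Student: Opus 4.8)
The plan is to prove both displays by the classical kernel-convolution method: for a function whose frequency support lies in $\lambda\mathscr{B}$ or $\lambda\mathscr{C}$ one writes $f$ (or its derivatives) as a convolution against a fixed smooth kernel rescaled by $\lambda$, and then applies Young's convolution inequality. The only genuine difficulty is to check that the resulting kernel norms grow no faster than $C^{k+1}$ in $k$. I would begin with the ball case. Fix once and for all a $\phi\in\mathscr{D}(\mathbb{R}^d)$ with $\phi\equiv 1$ on a neighbourhood of $\mathscr{B}$, and set $h=\mathscr{F}^{-1}\phi$. If $\mathrm{Supp}(\mathscr{F}f)\subset\lambda\mathscr{B}$ then $\mathscr{F}f=\phi(\cdot/\lambda)\mathscr{F}f$, so $f=\lambda^{d}\,h(\lambda\cdot)*f$ and hence $\partial^{\alpha}f=\lambda^{d+k}(\partial^{\alpha}h)(\lambda\cdot)*f$ for every $|\alpha|=k$. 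Choosing $r\in[1,\infty]$ with $1+\frac{1}{q}=\frac{1}{r}+\frac{1}{p}$ (possible since $p\le q$) and applying Young's inequality yields
$$\|\partial^{\alpha}f\|_{L^{q}}\le \lambda^{d+k}\,\|(\partial^{\alpha}h)(\lambda\cdot)\|_{L^{r}}\,\|f\|_{L^{p}}=\lambda^{k+d(\frac{1}{p}-\frac{1}{q})}\,\|\partial^{\alpha}h\|_{L^{r}}\,\|f\|_{L^{p}},$$
where the exponent comes from $d-\frac{d}{r}=d(\frac{1}{p}-\frac{1}{q})$. Taking the supremum over $|\alpha|=k$, it remains to prove $\sup_{|\alpha|=k}\|\partial^{\alpha}h\|_{L^{r}}\le C^{k+1}$.

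To control $\|\partial^{\alpha}h\|_{L^{r}}$ uniformly, I would pass through a weighted sup-norm. Since $\partial^{\alpha}h=\mathscr{F}^{-1}[(i\xi)^{\alpha}\phi]$, the identity $x^{\gamma}\partial^{\alpha}h=\mathscr{F}^{-1}[c_{\gamma}\partial^{\gamma}((i\xi)^{\alpha}\phi)]$ gives $\|x^{\gamma}\partial^{\alpha}h\|_{L^{\infty}}\le C_{d}\|\partial^{\gamma}((i\xi)^{\alpha}\phi)\|_{L^{1}}$. Running $|\gamma|$ over $0\le|\gamma|\le d+1$ makes $(1+|x|)^{d+1}\partial^{\alpha}h$ bounded, hence $\partial^{\alpha}h\in L^{r}$ for all $r$, with norm controlled by these finitely many $L^{1}$ quantities. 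Because $\phi$ is fixed and supported in a ball of fixed radius $R_{0}$, Leibniz's rule expands $\partial^{\gamma}((i\xi)^{\alpha}\phi)$ into boundedly many terms, each dominated by $R_{0}^{k}\sup_{|\gamma'|\le d+1}\|\partial^{\gamma'}\phi\|_{L^{1}}$ up to a combinatorial factor; since $|\gamma|$ stays bounded, all such factors are absorbed and one obtains $\sup_{|\alpha|=k}\|\partial^{\alpha}h\|_{L^{r}}\le C^{k+1}$, proving the first display.

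For the annulus the upper bound is immediate: $\lambda\mathscr{C}\subset\lambda\mathscr{B}'$ for a fixed ball $\mathscr{B}'$, so the first estimate with $p=q$ gives $\|D^{k}f\|_{L^{p}}\le C^{k+1}\lambda^{k}\|f\|_{L^{p}}$. For the reverse inequality I would exploit that $\mathscr{C}$ is bounded away from the origin. Fix $\chi\in\mathscr{D}(\mathbb{R})$ supported in an annulus and equal to $1$ near $\mathscr{C}$, and set $\tilde m(\eta)=\chi(\eta)/(i\eta)^{k}$, which is smooth and compactly supported because $\chi$ vanishes near $0$. Since $\mathscr{F}f=\chi(\cdot/\lambda)\mathscr{F}f$, one has $\mathscr{F}f(\xi)=\lambda^{-k}\tilde m(\xi/\lambda)\,\mathscr{F}(\partial_{x}^{k}f)(\xi)$, whence $f=\lambda^{-k}\lambda^{d}(\mathscr{F}^{-1}\tilde m)(\lambda\cdot)*\partial_{x}^{k}f$ and Young's inequality gives $\|f\|_{L^{p}}\le \lambda^{-k}\|\mathscr{F}^{-1}\tilde m\|_{L^{1}}\,\|\partial_{x}^{k}f\|_{L^{p}}$. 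The proof then closes as before, by showing $\|\mathscr{F}^{-1}\tilde m\|_{L^{1}}\le C^{k+1}$; here the only $k$-dependent feature is the factor $(i\eta)^{-k}$, which on $\mathrm{Supp}\,\chi$ (where $|\eta|\ge r_{0}>0$) contributes at most $r_{0}^{-k}$, again geometric in $k$, yielding the lower bound $C^{-k-1}\lambda^{k}\|f\|_{L^{p}}\le\|D^{k}f\|_{L^{p}}$.

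The genuinely delicate point — and the step I expect to be the main obstacle — is precisely this uniform-in-$k$ control of the kernel $L^{1}$-norms. A careless estimate of $\|\partial^{\gamma}((i\xi)^{\alpha}\phi)\|_{L^{1}}$, or of the derivatives of $\chi/(i\eta)^{k}$, could produce factorial factors $k!$, which would be far too large for the claimed geometric constants. The resolution is that one never differentiates in the frequency variable more than $d+1$ times (only as many as are needed to gain integrable spatial decay), so no combinatorial explosion occurs: all the $\alpha$- and $k$-dependence is concentrated in the monomial $(i\xi)^{\alpha}$, bounded by $R_{0}^{k}$ on $\mathrm{Supp}\,\phi$, or in $(i\eta)^{-k}$, bounded by $r_{0}^{-k}$ on $\mathrm{Supp}\,\chi$, both purely geometric. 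This is exactly what produces the constants $C^{k+1}$ and $C^{-k-1}$ asserted in the statement.
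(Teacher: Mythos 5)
The paper gives no proof of this proposition at all---it is quoted directly from Bahouri--Chemin--Danchin \cite{B-2011}---and your argument is precisely the standard proof from that reference: spectral cutoff, rescaled-kernel convolution, Young's inequality with $1+\frac1q=\frac1r+\frac1p$, and the key observation that the weighted bound requires at most $d+1$ frequency derivatives, so the $k$-dependence sits only in the monomial $(i\xi)^{\alpha}$ (or in $(i\eta)^{-k}$) and the constants stay geometric in $k$ (the factors $k^{d+1}$ from differentiating the monomial, which you correctly note are absorbed). The one point to flag is that your annulus step divides by $(i\eta)^{k}$, which is legitimate only in the one-dimensional setting the paper actually works in ($f\in L^{p}(\mathbb{R})$); in general dimension one instead writes $1=\sum_{|\alpha|=k}(-i\eta)^{\alpha}(i\eta)^{\alpha}|\eta|^{-2k}$ on $\mathrm{Supp}\,\chi$ and convolves each $\partial^{\alpha}f$ against the corresponding kernel, exactly as in \cite{B-2011}.
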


  	Then we recall some basic properties on the Littlewood-Paley theory.
  \begin{prop}\label{prop}\cite{B-2011}
  		Let $\mathcal{C}$ be the annulus $\{\xi\in\mathbb{R}^d:\frac 3 4\leq|\xi|\leq\frac 8 3\}$ and $\mathcal{B}$ be the ball $\{\xi\in\mathbb{R}^d:|\xi|\leq\frac 4 3\}$. There exists a couple of functions $(\chi, \varphi)$, valued in the interval $[0,1]$, belonging respectively to $\mathcal{D}(\mathcal{B})$ and $\mathcal{D}(\mathcal{C})$, and such that
  	$$ \forall\xi\in\mathbb{R}^d,\ \chi(\xi)+\sum_{j\ge0}\varphi(2^{-j}\xi)=1, $$
  	$$ |j-j'|\geq 2\Rightarrow\mathrm{Supp}\ \varphi(2^{-j}\cdot)\cap \mathrm{Supp}\ \varphi(2^{-j'}\cdot)=\emptyset. $$
  		$$ j\geq 1\Rightarrow\mathrm{Supp}\ \chi(\cdot)\cap \mathrm{Supp}\ \varphi(2^{-j}\cdot)=\emptyset. $$
  	Then for all $u\in \mathcal{S}'(\mathbb{R}^d)$, define $\Delta_ju=0$ for $j\le-2$; $\Delta_{-1}u\triangleq\chi(D)u=\mathcal{F}^{-1}(\chi\mathcal{F} u)$;  $\Delta_{j}u\triangleq\varphi(2^{-j}D)u=\mathcal{F}^{-1}(\varphi(2^{-j}\xi)\mathcal{F} u)$ for $j\le-2$;
  	Hence $u=\sum_{j\in\mathbb{Z}}\Delta_{j}u$, in $\mathcal{S}'(\mathbb{R}^d)$.
  	  \end{prop}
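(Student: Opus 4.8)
The plan is to give the classical dyadic construction: I build a single radial cut-off $\chi$, obtain $\varphi$ as a difference of two dilates of $\chi$, verify the listed identities and support conditions by elementary comparisons of radii, and finally prove the series identity $u=\sum_j\Delta_ju$ in $\mathcal{S}'$ via a telescoping computation combined with an approximate-identity argument in the Schwartz topology. First I would fix, by mollifying the indicator of a ball, a smooth radially nonincreasing function $\chi\in\mathcal{D}(\mathcal{B})$ valued in $[0,1]$ with $\chi(\xi)=1$ for $|\xi|\le\frac34$ and $\mathrm{Supp}\,\chi\subset\{|\xi|\le\frac43\}$, and then \emph{define} $\varphi(\xi):=\chi(\xi/2)-\chi(\xi)$. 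Since $\chi$ is radially nonincreasing and $|\xi/2|\le|\xi|$, one has $\chi(\xi/2)\ge\chi(\xi)$, so $0\le\varphi\le\chi(\xi/2)\le 1$; thus $\varphi$ is valued in $[0,1]$. Its support lies where $\chi(\xi/2)\ne\chi(\xi)$: as $\chi(\xi/2)=1$ for $|\xi|\le\frac32$ (both terms then equal $1$ for $|\xi|\le\frac34$) and $\chi(\xi/2)=\chi(\xi)=0$ for $|\xi|\ge\frac83$, we get $\mathrm{Supp}\,\varphi\subset\{\frac34\le|\xi|\le\frac83\}=\mathcal{C}$, so $\varphi\in\mathcal{D}(\mathcal{C})$.

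For the partition of unity I would use the telescoping identity
$$\sum_{j=0}^{N}\varphi(2^{-j}\xi)=\sum_{j=0}^{N}\big[\chi(2^{-j-1}\xi)-\chi(2^{-j}\xi)\big]=\chi(2^{-N-1}\xi)-\chi(\xi).$$
For each fixed $\xi$ the argument $2^{-N-1}\xi\to 0$ as $N\to\infty$, and since $\chi\equiv 1$ near the origin we obtain $\chi(2^{-N-1}\xi)\to 1$; hence $\sum_{j\ge0}\varphi(2^{-j}\xi)=1-\chi(\xi)$, which rearranges to $\chi(\xi)+\sum_{j\ge0}\varphi(2^{-j}\xi)=1$ for every $\xi$. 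The two support-separation properties are then pure radius bookkeeping: $\varphi(2^{-j}\cdot)$ is supported in $\{2^{j}\cdot\frac34\le|\xi|\le 2^{j}\cdot\frac83\}$, so if $j'\ge j+2$ the inner radius $2^{j'}\cdot\frac34\ge 3\cdot 2^{j}>\frac83\cdot 2^{j}$ shows the two annuli are disjoint, while for $j\ge 1$ the support of $\varphi(2^{-j}\cdot)$ lies in $|\xi|\ge 2^{j}\cdot\frac34\ge\frac32>\frac43$, disjoint from $\mathrm{Supp}\,\chi\subset\{|\xi|\le\frac43\}$.

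It remains to establish $u=\sum_{j\in\mathbb{Z}}\Delta_ju$ in $\mathcal{S}'$. Setting $\Delta_{-1}u=\chi(D)u$, $\Delta_ju=\varphi(2^{-j}D)u$ for $j\ge0$, and $\Delta_ju=0$ otherwise, the same telescoping on the Fourier side gives the partial sum $S_Nu:=\sum_{j=-1}^{N-1}\Delta_ju=\chi(2^{-N}D)u$, so I only need $\chi(2^{-N}D)u\to u$ in $\mathcal{S}'$. Since $\chi$ is radial, the multiplier is its own transpose, and by duality it suffices to show $\langle u,\chi(2^{-N}D)\phi-\phi\rangle\to0$ for every $\phi\in\mathcal{S}$. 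The key analytic point is the Schwartz-space convergence $\chi(2^{-N}D)\phi\to\phi$ in $\mathcal{S}(\mathbb{R}^d)$, i.e. $\chi(2^{-N}\xi)\widehat\phi(\xi)\to\widehat\phi(\xi)$ in $\mathcal{S}$: every seminorm $\sup_\xi|\xi^\alpha\partial^\gamma[(\chi(2^{-N}\xi)-1)\widehat\phi(\xi)]|$ tends to $0$, because each derivative falling on $\chi(2^{-N}\cdot)$ contributes a factor $2^{-N}$ and is supported in $|\xi|\gtrsim 2^{N}$, while the term with no such derivative uses $\chi(0)=1$ together with the rapid decay of $\widehat\phi$. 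Continuity of $u$ on $\mathcal{S}$ then forces $\langle u,\chi(2^{-N}D)\phi\rangle\to\langle u,\phi\rangle$, yielding the claimed decomposition.

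I expect the only genuinely delicate step to be this last one, the convergence $\chi(2^{-N}D)\phi\to\phi$ in the Schwartz topology, which is what upgrades the pointwise Fourier identity to convergence against arbitrary tempered distributions; the construction of $(\chi,\varphi)$ and the summation and support properties all reduce to explicit comparisons of dyadic radii.
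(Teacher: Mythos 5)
Your proof is correct, and there is nothing in the paper to compare it against: the paper states this proposition without proof, quoting it from the cited reference [B-2011] (indeed with a typo, since the definition $\Delta_j u\triangleq\varphi(2^{-j}D)u$ is written for ``$j\le-2$'' but must of course read $j\ge0$, which you silently and correctly fixed). Measured against the textbook argument, your route is the standard telescoping construction: taking $\chi$ radially nonincreasing and defining $\varphi(\xi)=\chi(\xi/2)-\chi(\xi)$ makes the partition of unity, the range $[0,1]$, both support-separation properties, and the partial-sum identity $\sum_{j=-1}^{N-1}\Delta_j u=\chi(2^{-N}D)u$ all follow from explicit radius comparisons, exactly as you carried out. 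The reference instead builds $\varphi$ by normalizing a fixed annular bump $\theta$ by the locally finite positive sum $\sum_{j\in\mathbb{Z}}\theta(2^{-j}\xi)$ and then sets $\chi=1-\sum_{j\ge0}\varphi(2^{-j}\cdot)$; the two constructions are interchangeable here, and yours is arguably the more elementary since positivity of the normalizing sum never has to be checked. You also correctly isolated the one genuinely analytic step: the convergence $\chi(2^{-N}D)\phi\to\phi$ in the Schwartz topology, transferred to arbitrary $u\in\mathcal{S}'$ by duality using that $\chi$ is even so the multiplier is its own transpose under the bilinear pairing. Your seminorm estimate is sound — Leibniz terms with a derivative on $\chi(2^{-N}\cdot)$ are supported in $\{|\xi|\ge\frac34 2^N\}$ and the derivative-free term vanishes there too since $\chi(2^{-N}\xi)=1$ for $|\xi|\le\frac34 2^N$, so rapid decay of $\widehat\phi$ finishes both — and this matches the completeness argument for the nonhomogeneous decomposition in [B-2011] (note, as a sanity check you implicitly pass, that the decomposition holds for \emph{all} tempered distributions, polynomials included, precisely because $\chi\equiv1$ near the origin).
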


  	The nonhomogeneous Bony's decomposition is defined by $uv=T_uv+T_vu+R(u,v)$ with
  	$$
  T_uv=\sum_{j}S_{j-1}u\Delta_{j}u,~~~~~R(u,v)=\sum_{j}\sum_{|j'-j|\le1}\Delta_{j}u\Delta_{j'}u.
  	$$
  	
  	Next we introduce some properties about Besov spaces.

  \begin{prop}\label{embedding}\cite{B-2011}
  	Let $1\le p_1\le p_2\le \infty$ and $1\le r_1\le r_2\le \infty$, and let $s$ be a real number. Then we have
  	$$
  	B^s_{p_1,r_1}\hookrightarrow B^{s-d(1/p_1-1/p_2)}_{p_2,r_2}.
  	$$
  	If $s>\frac{d}{p}$ or $s=\frac{d}{p}, r=1$, we then have
  	$$
  	B^s_{p,r}\hookrightarrow L{^{\infty}}.
  	$$
  \end{prop}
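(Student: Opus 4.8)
The plan is to prove both embeddings directly from the Littlewood--Paley characterization of the Besov norm, $\|f\|_{B^s_{p,r}}=\big\|\big(2^{js}\|\Delta_j f\|_{L^p}\big)_{j\ge-1}\big\|_{\ell^r}$, reducing everything to block-wise applications of Bernstein's inequality (Proposition \ref{Bernstein} with $k=0$) together with the elementary monotonicity $\ell^{r_1}\hookrightarrow\ell^{r_2}$ valid whenever $r_1\le r_2$. The whole argument is purely a matter of summing dyadic estimates.

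For the first embedding I would fix $f$ and treat each dyadic block separately. Since $\mathrm{Supp}\,\mathscr{F}(\Delta_j f)$ lies in a ball of radius $\sim 2^j$ (for $j=-1$) or in an annulus of size $\sim 2^j$ (for $j\ge0$), the $L^{p_1}\to L^{p_2}$ part of Bernstein, legitimate because $p_1\le p_2$, gives
\[
\|\Delta_j f\|_{L^{p_2}}\le C\,2^{jd(1/p_1-1/p_2)}\|\Delta_j f\|_{L^{p_1}}.
\]
Multiplying by the weight $2^{j(s-d(1/p_1-1/p_2))}$ cancels the gain exactly and leaves $2^{js}\|\Delta_j f\|_{L^{p_1}}$ on the right. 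Taking the $\ell^{r_2}$ norm in $j$ and then using $\|\cdot\|_{\ell^{r_2}}\le\|\cdot\|_{\ell^{r_1}}$ yields $\|f\|_{B^{s-d(1/p_1-1/p_2)}_{p_2,r_2}}\lm\|f\|_{B^s_{p_1,r_1}}$, which is the claim.

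For the embedding into $L^\infty$ I would write $f=\sum_{j\ge-1}\Delta_j f$ and bound $\|f\|_{L^\infty}\le\sum_j\|\Delta_j f\|_{L^\infty}$, then apply Bernstein with $q=\infty$ to obtain $\|\Delta_j f\|_{L^\infty}\le C\,2^{jd/p}\|\Delta_j f\|_{L^p}$. Hence $\|f\|_{L^\infty}\lm\sum_j 2^{j(d/p-s)}\big(2^{js}\|\Delta_j f\|_{L^p}\big)$. In the critical case $s=d/p$, $r=1$ the weight $2^{j(d/p-s)}$ is identically $1$, and the right-hand side is exactly $\|f\|_{B^{d/p}_{p,1}}$. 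In the case $s>d/p$ the weight decays geometrically over $j\ge-1$, so Hölder's inequality with conjugate exponents $(r',r)$ turns $\big(\sum_j 2^{j(d/p-s)r'}\big)^{1/r'}$ into a finite constant and bounds the sum by $\|f\|_{B^s_{p,r}}$. The absolute convergence of $\sum_j\|\Delta_j f\|_{L^\infty}$ simultaneously shows that the series defining $f$ converges in $L^\infty$, so $f$ genuinely belongs to $L^\infty$.

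There is no serious analytic obstacle here: all the content is packaged in Bernstein's inequality, which is already available as Proposition \ref{Bernstein}. The only points that need care are bookkeeping ones, namely handling the low-frequency block $j=-1$ (supported in a ball rather than an annulus, but Bernstein still applies), ensuring the geometric series in the $L^\infty$ estimate is summed over $j\ge-1$ rather than over all of $\mathbb{Z}$ (so that the decaying weight does produce a finite constant), and justifying the monotonicity step $\ell^{r_1}\hookrightarrow\ell^{r_2}$.
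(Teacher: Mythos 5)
Your proof is correct. The paper itself offers no proof of this proposition --- it is quoted directly from \cite{B-2011} --- and your argument (blockwise Bernstein with $k=0$, noting that the $j=-1$ ball block and the annulus blocks $j\ge0$ are both covered by the ball version, then the weight cancellation plus $\ell^{r_1}\hookrightarrow\ell^{r_2}$ for the first embedding, and the geometric/H\"older summation with the critical case $s=d/p$, $r=1$ treated separately for the second) is exactly the standard proof found in that reference.
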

  \begin{prop}\cite{B-2011}
  	The set $B^s_{p,r}$ is a Banach space and satisfies the Fatou property, namely, if $(u_n)_{n\in \mathbb{N}}$
  	is a bounded sequence in $B^s_{p,r}$, then an element $u\in B^s_{p,r}$ and a subsequence $(u_{n_k})_{k\in\mathbb{N}}$ exist such that
  	\begin{equation*}
  		\lim\limits_{k\to\infty}u_{n_k}=u~~~in ~\mathcal{S}'~~~~~and~~~~\|u\|_{B^s_{p,r}}\le C\liminf\limits_{k\to\infty}\|u_{n_k}\|_{B^s_{p,r}}.
  	\end{equation*}
  \end{prop}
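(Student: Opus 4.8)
The plan is to treat the two assertions in turn, deducing completeness from the Fatou property. Recall that, by definition, $\|u\|_{B^s_{p,r}}=\big\|(2^{js}\|\Delta_j u\|_{L^p})_{j\ge-1}\big\|_{\ell^r}$. That this is a norm is routine: homogeneity and the triangle inequality follow from linearity of the operators $\Delta_j$ combined with the triangle inequalities in $L^p$ and $\ell^r$, while $\|u\|_{B^s_{p,r}}=0$ forces $\Delta_j u=0$ for every $j$, hence $u=\sum_j\Delta_j u=0$ in $\mathcal{S}'$ by Proposition \ref{prop}. So the genuine content is (i) the Fatou property and (ii) completeness.

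For the Fatou property, let $(u_n)$ be bounded in $B^s_{p,r}$, say $\|u_n\|_{B^s_{p,r}}\le M$. First I would fix $j$ and note that $(\Delta_j u_n)_n$ is bounded in $L^p$, with Fourier support in the fixed compact annulus $2^j\mathscr{C}$ (a ball for $j=-1$). By Bernstein's inequalities (Proposition \ref{Bernstein}) every derivative $\partial^\alpha\Delta_j u_n$ is then bounded in $L^p$, uniformly in $n$, so $(\Delta_j u_n)_n$ is equicontinuous and bounded on compact sets. A diagonal extraction over $j$ via Arzel\`a--Ascoli produces a single subsequence $(u_{n_k})$ and functions $v_j$ with $\Delta_j u_{n_k}\to v_j$ locally uniformly (and in $\mathcal{S}'$) for every $j$; each $v_j$ inherits spectral support in $2^j\mathscr{C}$, and lower semicontinuity of the $L^p$ norm gives $\|v_j\|_{L^p}\le\liminf_k\|\Delta_j u_{n_k}\|_{L^p}$.

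Next I would set $u:=\sum_j v_j$ and verify it is the desired limit. For a Schwartz test function $\phi$ one has $|\langle\Delta_j u_{n_k},\phi\rangle|\le\|\Delta_j u_{n_k}\|_{L^p}\|\widetilde{\Delta}_j\phi\|_{L^{p'}}\lesssim M\,2^{-js}\|\widetilde{\Delta}_j\phi\|_{L^{p'}}$, where $\widetilde{\Delta}_j$ is a fattened projector and the last factor decays faster than any power of $2^{-j}$ since $\phi$ is Schwartz; this uniform-in-$k$ summability justifies interchanging sum and limit, so both $\sum_j v_j$ and $u_{n_k}$ converge in $\mathcal{S}'$ to $u$. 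Because $v_j$ is supported in $2^j\mathscr{C}$, the support conditions of Proposition \ref{prop} yield $\Delta_k u=\sum_{|j-k|\le1}\Delta_k v_j$, whence $\|\Delta_k u\|_{L^p}\lesssim\sum_{|j-k|\le1}\|v_j\|_{L^p}$. Applying the $\ell^r$ form of Fatou's lemma to $(2^{js}\|\Delta_j u_{n_k}\|_{L^p})_j$ then gives
\begin{equation*}
\|u\|_{B^s_{p,r}}\le C\,\big\|(2^{js}\|v_j\|_{L^p})_j\big\|_{\ell^r}\le C\,\liminf_{k\to\infty}\|u_{n_k}\|_{B^s_{p,r}},
\end{equation*}
the finite band-shift $|j-k|\le1$ being responsible for the constant $C$. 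Completeness now follows: a Cauchy sequence $(u_n)$ is bounded, so Fatou supplies $u\in B^s_{p,r}$ and a subsequence with $u_{n_k}\to u$ in $\mathcal{S}'$; fixing $\ep>0$ and $N$ with $\|u_n-u_m\|_{B^s_{p,r}}<\ep$ for $n,m\ge N$, I would apply the Fatou bound to $(u_n-u_{n_k})_k$ (which tends to $u_n-u$ in $\mathcal{S}'$) to get $\|u_n-u\|_{B^s_{p,r}}\le C\liminf_k\|u_n-u_{n_k}\|_{B^s_{p,r}}\le C\ep$ for $n\ge N$, so $u_n\to u$ in $B^s_{p,r}$.

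The main obstacle I expect is the subsequence extraction and the identification of the limit: one must pass from boundedness of each band in $L^p$ to a single subsequence valid for all $j$ simultaneously, ensure the assembled series $\sum_j v_j$ defines a bona fide tempered distribution, and check that its Littlewood--Paley blocks reproduce the $v_j$ up to the neighbouring-band overlap. This is where the spectral-support bookkeeping and the $\mathcal{S}'$-convergence estimate carry the argument. A secondary point is the $L^p$ lower semicontinuity in the endpoint cases $p=1$ and $p=\infty$, where one should invoke a.e.\ convergence along a further subsequence or weak-$*$ compactness rather than reflexivity.
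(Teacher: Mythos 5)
The paper offers no proof of this proposition at all --- it is quoted verbatim from \cite{B-2011} as a known fact --- so there is nothing in-paper to compare against; your argument is, in substance, the standard proof from that reference, and it is correct: bandwise compactness via Bernstein plus Arzel\`a--Ascoli, a diagonal extraction over $j$, reassembly of $u=\sum_j v_j$ in $\mathcal{S}'$ using the super-polynomial decay of $\|\widetilde{\Delta}_j\phi\|_{L^{p'}}$ for Schwartz $\phi$, the $\ell^r$ Fatou lemma with the constant $C$ produced by the $|j-k|\le 1$ spectral overlap, and completeness deduced afterwards from the Fatou bound applied to $(u_n-u_{n_k})_k$ (where, as you implicitly use, every term of $\|u_n-u_{n_k}\|_{B^s_{p,r}}$ is below $\ep$, so the liminf along the extracted subsequence is too). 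One imprecision worth fixing: uniform bounds on $\partial^\alpha\Delta_j u_n$ \emph{in $L^p$} do not by themselves give equicontinuity; you should apply Bernstein with $q=\infty$, which for spectra in $2^j\mathscr{C}$ gives $\|\partial^\alpha\Delta_j u_n\|_{L^\infty}\le C\,2^{j(|\alpha|+d/p)}\,M\,2^{-js}$ uniformly in $n$. This single estimate repairs three things at once: it yields the equicontinuity and local boundedness needed for Arzel\`a--Ascoli, it upgrades locally uniform convergence to convergence in $\mathcal{S}'$ (dominated convergence against an integrable test function, using the uniform $L^\infty$ bound), and it gives pointwise convergence of $\Delta_j u_{n_k}$, so lower semicontinuity of the $L^p$ norm follows from Fatou's lemma for $p<\infty$ and trivially for $p=\infty$ --- hence the endpoint concern in your final paragraph about $p=1$ and $p=\infty$ is unnecessary. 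With that adjustment the proof is complete and matches the cited source's approach.
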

  	%\begin{prop}\cite{B-2011}
  %	Let $s\in\mathbb{R},\ 1\leq p,r\leq\infty.$
  %	\begin{equation*}\left\{
  	%	\begin{array}{l}
  	%		B^s_{p,r}\times B^{-s}_{p',r'}\longrightarrow\mathbb{R},  \\
  	%		(u,\phi)\longmapsto \sum\limits_{|j-j'|\leq 1}\langle \Delta_j u,\Delta_{j'}\phi\rangle,
  	%	\end{array}\right.
  	%\end{equation*}
  	%defines a continuous bilinear functional on $B^s_{p,r}\times B^{-s}_{p',r'}$. Denoted by $Q^{-s}_{p',r'}$ the set of functions $\phi$ in $\mathcal{S}'$ such that
  	%$\|\phi\|_{B^{-s}_{p',r'}}\leq 1$. If $u$ is in $\mathcal{S}'$, then we have
  	%$$\|u\|_{B^s_{p,r}}\leq C\sup_{\phi\in Q^{-s}_{p',r'}}\langle u,\phi\rangle.$$
  %\end{prop}
  \begin{lemm}\label{product}\cite{B-2011,L-2016}
  	For $s=\frac{1}{2}$, $p=2$ and $r=1$, the space $L^{\infty} \cap B^\frac{1}{2}_{2,1}$ is an algebra, and a constant $C=C(s,d)$ exists such that
  	$$ \|uv\|_{B^{\frac{1}{2}}_{2,1}}\leq C(\|u\|_{L^{\infty}}\|v\|_{B^{\frac{1}{2}}_{2,1}}+\|u\|_{B^{\frac{1}{2}}_{2,1}}\|v\|_{L^{\infty}})\le C\|u\|_{B^{\frac{1}{2}}_{2,1}}\|v\|_{B^{\frac{1}{2}}_{2,1}}. $$
  \end{lemm}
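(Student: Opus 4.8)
The statement is the standard product estimate in the critical Besov space $B^{1/2}_{2,1}$, and the natural tool is Bony's decomposition $uv=T_uv+T_vu+R(u,v)$ recalled above. The plan is to estimate the two paraproducts and the remainder separately against the $B^{1/2}_{2,1}$ norm, using the spectral localization of the Littlewood--Paley blocks together with Bernstein's inequalities (Proposition~\ref{Bernstein}); summing the three contributions will yield the first inequality, while the second will follow from the endpoint embedding $B^{1/2}_{2,1}\hookrightarrow L^\infty$ of Proposition~\ref{embedding}.

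For the paraproduct $T_uv=\sum_j S_{j-1}u\,\Delta_j v$, I would first observe that each summand has Fourier support in an annulus of size $\sim 2^j$, so $\Delta_k(T_uv)$ only involves indices $j$ with $|j-k|$ bounded by a fixed constant $N$. Bounding $\|S_{j-1}u\|_{L^\infty}\lesssim\|u\|_{L^\infty}$ and using $\|\Delta_k(S_{j-1}u\,\Delta_j v)\|_{L^2}\le\|S_{j-1}u\|_{L^\infty}\|\Delta_j v\|_{L^2}$, I then multiply by $2^{k/2}$ and sum over $k$. Since the coupling $|j-k|\le N$ is a fixed-width band, a discrete Young inequality (or direct reindexing) gives $\|T_uv\|_{B^{1/2}_{2,1}}\lesssim\|u\|_{L^\infty}\|v\|_{B^{1/2}_{2,1}}$, and by symmetry $\|T_vu\|_{B^{1/2}_{2,1}}\lesssim\|v\|_{L^\infty}\|u\|_{B^{1/2}_{2,1}}$.

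The remainder $R(u,v)=\sum_j\Delta_ju\,\widetilde\Delta_jv$, with $\widetilde\Delta_j=\sum_{|j'-j|\le1}\Delta_{j'}$, is the decisive term. Here each summand has spectrum in a \emph{ball} of radius $\sim 2^j$ rather than an annulus, so $\Delta_k R(u,v)$ picks up all indices $j\gtrsim k$. Pairing in $L^2\times L^\infty$, I would bound $\|\Delta_ju\,\widetilde\Delta_jv\|_{L^2}\le\|\Delta_ju\|_{L^2}\|\widetilde\Delta_jv\|_{L^\infty}\lesssim\|v\|_{L^\infty}\|\Delta_ju\|_{L^2}$, using the uniform boundedness of the projections on $L^\infty$, whence $2^{k/2}\|\Delta_kR(u,v)\|_{L^2}\lesssim\|v\|_{L^\infty}\,2^{k/2}\sum_{j\ge k-N_0}\|\Delta_ju\|_{L^2}$. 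Summing over $k$ and exchanging the order of summation reduces the inner sum to $\sum_{k\le j+N_0}2^{k/2}\sim 2^{j/2}$; this geometric series converges precisely because $s=\tfrac12>0$, which is the crux of the whole argument. One thus obtains $\|R(u,v)\|_{B^{1/2}_{2,1}}\lesssim\|v\|_{L^\infty}\|u\|_{B^{1/2}_{2,1}}$.

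Collecting the three estimates yields $\|uv\|_{B^{1/2}_{2,1}}\lesssim\|u\|_{L^\infty}\|v\|_{B^{1/2}_{2,1}}+\|u\|_{B^{1/2}_{2,1}}\|v\|_{L^\infty}$, which is the first claimed bound. Finally, applying the endpoint embedding $B^{1/2}_{2,1}\hookrightarrow L^\infty$ from Proposition~\ref{embedding} (the case $s=\tfrac dp$ with $d=1$, $p=2$, $r=1$) to replace the two $L^\infty$ factors gives $\|uv\|_{B^{1/2}_{2,1}}\lesssim\|u\|_{B^{1/2}_{2,1}}\|v\|_{B^{1/2}_{2,1}}$, so that $L^\infty\cap B^{1/2}_{2,1}$ is an algebra. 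The main obstacle is the remainder term: it is the only place where the sign of the regularity index enters, and the entire estimate hinges on the convergence of the low-frequency geometric sum afforded by $s>0$.
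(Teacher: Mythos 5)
The paper does not prove this lemma at all---it is quoted directly from the cited references \cite{B-2011,L-2016} (it is Corollary~2.86 in Bahouri--Chemin--Danchin, for general $s>0$). Your Bony-decomposition argument is exactly the standard proof given there, and it is correct: the paraproducts are handled by the uniform $L^\infty$-boundedness of $S_{j-1}$, the remainder by the geometric low-frequency sum that converges precisely because $s=\tfrac12>0$, and the algebra property then follows from the endpoint embedding $B^{1/2}_{2,1}(\mathbb{R})\hookrightarrow L^\infty$.
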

  Let's review the Gronwall lemma as follows.
  \begin{lemm}\label{osgood}\cite{B-2011}
  	Let $f(t),~ g(t)\in C^{1}([0,T]), f(t), g(t)\geq 0.$ Let $h(t)$ be a continuous function on $[0,T].$ Assume that, for any $t\in [0,T]$ such that
  	$$\frac 1 2 \frac{d}{dt}f^{2}(t)\leq h(t)f^{2}(t)+g(t)f(t).$$
  	Then for any time $t\in [0,T],$ we have
  	$$f(t)\leq f(0)\exp\big(\int_0^th(\tau)d\tau\big)+\int_0^t g(\tau)\exp\big(\int_\tau ^t h(\tau)dt'\big)d\tau.$$
  \end{lemm}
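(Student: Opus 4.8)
The plan is to reduce the quadratic differential inequality to a linear one and then apply the standard integrating-factor argument. Observe first that $\frac12\frac{d}{dt}f^2 = f f'$, so the hypothesis reads $f(t)f'(t)\le h(t)f^2(t)+g(t)f(t)$. If $f$ were strictly positive on all of $[0,T]$, I would simply divide through by $f(t)$ to obtain the linear differential inequality $f'(t)\le h(t)f(t)+g(t)$, and the conclusion would follow at once from the integrating factor $E(t):=\exp\big(-\int_0^t h(\tau)\,d\tau\big)$: indeed $\frac{d}{dt}(Ef)=E(f'-hf)\le Eg$, so integrating from $0$ to $t$ and multiplying by $E(t)^{-1}=\exp\big(\int_0^t h\big)$ yields exactly the claimed bound, using $E(t)^{-1}E(\tau)=\exp\big(\int_\tau^t h\big)$.

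The difficulty is that $f$ may vanish, so dividing by $f$ is not permitted, and moreover $h$ is only assumed continuous (not sign-definite), so the naive comparison $hf^2\le h(f^2+\ep^2)$ can fail. To circumvent both issues I would regularize: set $f_\ep(t):=\sqrt{f^2(t)+\ep^2}$ for $\ep>0$. Since $f\in C^1$ and $f^2+\ep^2\ge\ep^2>0$, the function $f_\ep$ is $C^1$, strictly positive, and $f_\ep f_\ep'=\frac12(f^2)'\le hf^2+gf$. Writing $f^2=f_\ep^2-\ep^2$ and using $0\le f\le f_\ep$ together with $g\ge0$, I obtain $f_\ep f_\ep'\le h f_\ep^2-h\ep^2+gf_\ep$. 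Dividing by $f_\ep>0$ and bounding $\ep^2/f_\ep\le\ep$ (because $f_\ep\ge\ep$) leaves the clean linear inequality
$$f_\ep'(t)\le h(t)f_\ep(t)+g(t)+|h(t)|\,\ep,$$
in which the extra error term is harmless.

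Applying the integrating-factor step above to $f_\ep$, with $g$ replaced by $g+|h|\ep$, then gives
$$f_\ep(t)\le f_\ep(0)\exp\Big(\int_0^t h(\tau)\,d\tau\Big)+\int_0^t\big(g(\tau)+|h(\tau)|\,\ep\big)\exp\Big(\int_\tau^t h(s)\,ds\Big)\,d\tau.$$
Finally I would let $\ep\to0^+$: one has $f_\ep(t)\to f(t)$ and $f_\ep(0)\to f(0)$ pointwise, while the $|h|\ep$ contribution tends to $0$ by the boundedness of $h$ and of $\exp\big(\int_\tau^t h\big)$ on the compact interval $[0,T]$. Passing to the limit yields the desired estimate for every $t\in[0,T]$. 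The only genuinely delicate point is the control of the zeros of $f$ when $h$ is sign-indefinite; the regularization $f_\ep$ resolves this by trading the illegal division by $f$ for a vanishing correction of order $\ep$.
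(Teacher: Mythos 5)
Your proof is correct, and it is essentially the standard argument: the paper itself states this lemma with a citation to Bahouri--Chemin--Danchin rather than proving it, and your regularization $f_\ep=\sqrt{f^2+\ep^2}$ followed by division and an integrating factor is exactly the classical device used there (and, incidentally, the same $\ep$-smoothing trick the authors themselves employ in Step 2 of Section 3 when passing from $\frac{d}{dt}\|\Delta_j U^{n+1}\|_{L^2}^2$ to $\frac{d}{dt}\big(\|\Delta_j U^{n+1}\|_{L^2}^2+\ep\big)^{1/2}$). You also correctly spotted and handled the one genuine subtlety, namely that $hf^2\le h(f^2+\ep^2)$ fails for sign-indefinite $h$, by absorbing the discrepancy into the vanishing error term $|h|\,\ep$.
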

  Now we state some useful results in the linear symmetric system, which are important to the proofs of our main theorem later.
  \begin{equation}\label{transport}
  	\left\{\begin{array}{l}
  		\partial_tU+\mathcal{A}\partial_xU=F, \\
  		U|_{t=t_0}=U_0,
  	\end{array}\right.
  \end{equation}
  where $I$ is an interval of $\mathbb{R}$, $\mathcal{A}$ is a smooth bounded function from $I\times\mathbb{R}$ into the space of $N\times N$ symmetric matrices with real coefficients, $U_0:\mathbb{R}^1\to\mathbb{R}^N$, $F:I\times\mathbb{R}^1\to\mathbb{R}^N$ and $t_0\in I$.

  \begin{lemm}\label{existence}\cite{B-2011}
  	Let $s>0$,$r\in[1,\infty]$, and $V$ satisfy
  	\begin{equation*}
  		\partial_tV+\sum_{k=1}^{d}\mathcal{A}_k\partial_kV=F.
  	\end{equation*}
  	 Let $V_j\triangleq\Delta_jV$, $\overline{S}_j\triangleq S_j$ if $j\ge0$, and $\overline{S}_j\triangleq\Delta_{-1}$ if $j\in\{-2,-1\}$. We have
  	 	\begin{equation*}
  	 	\partial_tV_j+\sum_{k=1}^{d}(\overline{S}_{j-1}\mathcal{A}_k)\partial_kV_j=\Delta_jF+R_j, ~~for~all~j\ge-1,
  	 \end{equation*}
  	 where $R_j$ satisfies, for $\forall~t\in I$,
  	 \begin{equation}
  	 	2^{js}||R_j^n||_{L^2}\le Cc_j(t)\sum_{k=1}^{d}\bigg(||\nabla \mathcal{A}_k(t)||_{L^{\infty}}||\nabla V(t)||_{B^{s-1}_{2,r}}+||\nabla V(t)||_{L^{\infty}}||\nabla \mathcal{A}_k(t)||_{B^{s-1}_{2,r}}\bigg).
  	 \end{equation}
  	%If $0<s<d/2+1$, then we also have
  	%\begin{equation}
  	%2^{js}||R_j^n||_{L^2}\le Cc_j(t)||\nabla V(t)||_{B^{s-1}_{2,r}}\sum_{k=1}^{d}||\nabla \mathcal{A}_k(t)||_{L^{\infty}\cap B^{d/2}_{2,\infty}},
  	%\end{equation}
  	If $s=d/2+1$, then we have, for all $\varepsilon>0$,
  	\begin{equation}
  		2^{j(d/2+1)}||R_j^n||_{L^2}\le Cc_j(t)||\nabla V(t)||_{B^{d/2}_{2,r}}\sum_{k=1}^{d}||\nabla \mathcal{A}_k(t)||_{L^{\infty}\cap B^{d/2+\varepsilon}_{2,\infty}}.
  	\end{equation}
  	
  \end{lemm}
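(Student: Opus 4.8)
The plan is to derive the localized equation directly from the definition of the Littlewood--Paley blocks, identify the remainder $R_j$ as a paradifferential commutator, and bound it with the Bony decomposition and the commutator/paraproduct estimates of \cite{B-2011}. Applying $\Delta_j$ to $\partial_tV+\sum_k\mathcal{A}_k\partial_kV=F$ and adding and subtracting $\sum_k\overline{S}_{j-1}\mathcal{A}_k\,\partial_kV_j$ (recall $V_j=\Delta_jV$ and that $\Delta_j$ commutes with $\partial_k$) produces the stated identity with
$$R_j=\sum_{k=1}^{d}R_j^{(k)},\qquad R_j^{(k)}=\overline{S}_{j-1}\mathcal{A}_k\,\partial_kV_j-\Delta_j(\mathcal{A}_k\partial_kV).$$
The role of the cutoff $\overline{S}_{j-1}$ is that its output has spectrum in a ball of radius $\sim2^{j-1}$ when $j\ge1$ (and in a fixed ball when $j\in\{-1,0\}$, where $\overline{S}_{j-1}=\Delta_{-1}$), which is strictly below the annulus $2^{j}\mathcal{C}$ carrying $V_j$; this frequency gap is exactly what makes $R_j$ genuinely lower order rather than a leading transport term.

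The algebraic heart is the exact splitting obtained by inserting $\Delta_j(\overline{S}_{j-1}\mathcal{A}_k\partial_kV)$,
$$R_j^{(k)}=\big[\overline{S}_{j-1}\mathcal{A}_k,\Delta_j\big]\partial_kV+\Delta_j\big((\overline{S}_{j-1}\mathcal{A}_k-\mathcal{A}_k)\partial_kV\big)=:R_j^{(k),1}+R_j^{(k),2}.$$
For the commutator $R_j^{(k),1}$ I would use the kernel representation $\Delta_jg=2^{jd}h(2^{j}\cdot)\ast g$, so that the commutator kernel carries the difference $\overline{S}_{j-1}\mathcal{A}_k(x)-\overline{S}_{j-1}\mathcal{A}_k(y)$; the mean value theorem together with $\|\nabla\overline{S}_{j-1}\mathcal{A}_k\|_{L^\infty}\lesssim\|\nabla\mathcal{A}_k\|_{L^\infty}$ extracts the gain $2^{-j}$, and combining this with the Bony decomposition of $\partial_kV$ recalled in Section~2 yields the standard commutator bound $2^{js}\|R_j^{(k),1}\|_{L^2}\lesssim c_j(t)\big(\|\nabla\mathcal{A}_k\|_{L^\infty}\|\nabla V\|_{B^{s-1}_{2,r}}+\|\nabla V\|_{L^\infty}\|\nabla\mathcal{A}_k\|_{B^{s-1}_{2,r}}\big)$, valid in the admissible range $0<s<d/2+1$, with $(c_j(t))_j$ normalized in $\ell^r$. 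For the frequency-gap term $R_j^{(k),2}$ I would use that $(\mathrm{Id}-\overline{S}_{j-1})\mathcal{A}_k=\sum_{j'\ge j-1}\Delta_{j'}\mathcal{A}_k$ retains only frequencies $\gtrsim2^{j}$; a further Bony decomposition of the product, Bernstein's inequalities (Proposition~\ref{Bernstein}), and $\ell^r$ summation over $j'$ reproduce the same bound. Summing over $k$ and over $j$ gives the first asserted estimate.

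At the endpoint $s=d/2+1$ one has $s-1=d/2=d/p$ with $p=2$, which is exactly the index where $B^{d/2}_{2,r}\not\hookrightarrow L^\infty$ for $r>1$ and the commutator estimate above fails. The remedy is to keep a sliver of extra regularity on the coefficient: replacing the critical norm of $\nabla\mathcal{A}_k$ by $\|\nabla\mathcal{A}_k\|_{L^\infty\cap B^{d/2+\varepsilon}_{2,\infty}}$ restores room in the high--high interactions and in the paraproduct $T_{\partial_kV}\mathcal{A}_k$, while $B^{d/2+\varepsilon}_{2,\infty}\hookrightarrow L^\infty$ (Proposition~\ref{embedding}) controls the corresponding $L^\infty$ factors, so that
$$2^{j(d/2+1)}\|R_j\|_{L^2}\lesssim c_j(t)\,\|\nabla V\|_{B^{d/2}_{2,r}}\sum_{k=1}^{d}\|\nabla\mathcal{A}_k\|_{L^\infty\cap B^{d/2+\varepsilon}_{2,\infty}}$$
closes without logarithmic loss. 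The low-frequency blocks $j\in\{-1,0\}$, where $\overline{S}_{j-1}=\Delta_{-1}$, are treated separately and more easily, since then both the cutoff coefficient and $V_j$ have compactly supported spectrum and Bernstein's inequality applies directly.

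The main obstacle is precisely this endpoint bookkeeping. The general-$s$ commutator estimate is off-the-shelf, but at $s=d/2+1$ one must track exactly which pieces of the Bony expansion of $R_j^{(k),1}$ and of the high-frequency remainder $R_j^{(k),2}$ land at the critical index, and verify that the extra $\varepsilon$ of regularity on $\nabla\mathcal{A}_k$ (rather than on $\nabla V$, which is kept only at the critical level $B^{d/2}_{2,r}$) is what absorbs the failure of the borderline embedding. Getting the norms to split as $\|\nabla V\|_{B^{d/2}_{2,r}}$ against $\|\nabla\mathcal{A}_k\|_{L^\infty\cap B^{d/2+\varepsilon}_{2,\infty}}$, rather than symmetrically, is the delicate point and is exactly the form needed to run the a priori estimates for \eqref{1.4} in the critical space $B^{3/2}_{2,1}$.
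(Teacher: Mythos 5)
This lemma is not proved in the paper at all: it is quoted verbatim from \cite{B-2011} (it is the remainder estimate underlying the theory of linear symmetric systems in Chapter~4 of Bahouri--Chemin--Danchin), so the only meaningful comparison is with the proof given there. Your reconstruction is essentially that argument, with a mildly coarser packaging: where \cite{B-2011} splits $R_j$ via Bony's decomposition from the outset, into the paraproduct commutator $[T_{\mathcal{A}_k},\Delta_j]\partial_kV$, low-frequency correction terms comparing $T_{\mathcal{A}_k}$ with $\overline{S}_{j-1}\mathcal{A}_k$, and the terms $\Delta_j\big(T_{\partial_kV}\mathcal{A}_k+R(\mathcal{A}_k,\partial_kV)\big)$, you use the two-term splitting $R_j^{(k)}=[\overline{S}_{j-1}\mathcal{A}_k,\Delta_j]\partial_kV+\Delta_j\big((\overline{S}_{j-1}\mathcal{A}_k-\mathcal{A}_k)\partial_kV\big)$. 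This is algebraically exact and does work: because the coefficient inside the commutator is frequency-truncated to a ball of radius $\sim 2^j$, only blocks $\Delta_{j'}\partial_kV$ with $|j'-j|\le N_0$ contribute, so the kernel/mean-value argument gives the factor $2^{-j}\|\nabla\mathcal{A}_k\|_{L^\infty}$ against $c_j2^{-j(s-1)}\|\nabla V\|_{B^{s-1}_{2,r}}$ directly; and the frequency-gap term sums geometrically, $\sum_{j'\ge j-1}c_{j'}2^{-j's}\lesssim \tilde c_j 2^{-js}$, which is where (and only where) the hypothesis $s>0$ enters. Your treatment of the endpoint is also the right mechanism: the $2^{-j'\varepsilon}$ gain from $\|\nabla\mathcal{A}_k\|_{B^{d/2+\varepsilon}_{2,\infty}}$ absorbs the $\ell^r$ loss (of order $(j')^{1-1/r}$) incurred when $\|S_{j'-1}\nabla V\|_{L^\infty}$ is estimated by partial sums of $\|\nabla V\|_{B^{d/2}_{2,r}}$ rather than by $\|\nabla V\|_{L^\infty}$.

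One framing point should be corrected, though it does not invalidate your sketch. The first estimate does \emph{not} fail at $s=d/2+1$: the lemma asserts it for all $s>0$, endpoint included, and indeed the paper applies precisely that bound with $d=1$, $s=3/2$, $r=1$ in Step~2 of the proof of Theorem \ref{them1}. Accordingly, your claimed admissible range ``$0<s<d/2+1$'' for the commutator bound is unnecessarily restrictive; with the truncated coefficient $\overline{S}_{j-1}\mathcal{A}_k$ inside the commutator there is no upper limit on $s$. The second estimate is not a remedy for a failing bound but a different trade-off: it eliminates the factor $\|\nabla V(t)\|_{L^\infty}$ entirely (which $\|\nabla V\|_{B^{d/2}_{2,r}}$ cannot control when $r>1$, exactly as you note), at the price of the stronger norm $\|\nabla\mathcal{A}_k\|_{L^{\infty}\cap B^{d/2+\varepsilon}_{2,\infty}}$ on the coefficients --- this is what makes it useful for stability and continuity arguments where $V$ carries only critical regularity.
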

  \begin{lemm}\label{priori estimate}\cite{B-2011}
  	Let $s=\frac{3}{2},\ r=1,\ U_0\in B^{\frac{3}{2}}_{2,1}$, and $F\in \mathcal{C}(I;B^{\frac{3}{2}}_{2,1})$.
  	Suppose that the matrices $\mathcal{A}$ are symmetric and continuous with respect to $(t,x)$, and that $\partial_x \mathcal{A}\in\mathcal{C}(I;B^{\frac{1}{2}}_{2,1})$.

 %--$\nabla \mathcal{A}_k\in\mathcal{C}(I;B^{d/2+\varepsilon}_{2,\infty})$ for some  $\varepsilon>0$ if $s=d/2+1$ and $r>1$,

% --$\nabla \mathcal{A}_k\in\mathcal{C}(I;B^{d/2}_{2,\infty}\cap L^{\infty})$ if $0<s<d/2+1$.

  The system
  \begin{equation}\label{transport}
  	\left\{\begin{array}{l}
  		\partial_tU+\mathcal{A}\partial_xU=F, \\
  		U|_{t=t_0}=U_0,
  	\end{array}\right.
  \end{equation}
  then has a unique solution $U\in\mathcal{C}(I;B^{\frac{3}{2}}_{2,1})\cap \mathcal{C}^1(I;B^{\frac{1}{2}}_{2,1})$. Moreover, for all $t\in I$ and $C=C(d,s)$, we have
  $$
  |U(t)|_{B^{\frac{3}{2}}_{2,1}}\le |U_0|_{B^{\frac{3}{2}}_{2,1}}\exp\bigg(\int_{0}^{t}C\|\partial_x\mathcal{A}(t')\|_{B^{\frac{1}{2}}_{2,1}}dt'\bigg)+\int_{0}^{t}|F'(t')|_{B^{s}_{2,r}}\exp\bigg(\int_{t'}^{t}C\|\partial_x  \mathcal{A}(t'')\|_{B^{\frac{1}{2}}_{2,1}}dt''\bigg)dt'
  $$
  with $|U(t)|_{B^{\frac{3}{2}}_{2,1}}\triangleq||2^{\frac{3}{2}s}(\|\Delta_ju\|_{L^2}+\|\Delta_jv\|_{L^2})||_{l^1}$.
  %and
%  	\begin{equation*}
 % 		a_s(t)\triangleq
  %		\left\{\begin{array}{l}
  	%	\sum_{k}\|\nabla \mathcal{A}_k(t)\|_{B^{s-1}_{2,r}},\ \text{if}~  s>d/2+1,~ \text{or}~s=d/2+1 \text{and}~r=1, \\
  	%	\sum_k\|\nabla \mathcal{A}_k(t)\|_{B^{d/2+\varepsilon}_{2,\infty}},\ \text{if}\ s=d/2+1 \text{and}\ r>1, \\
  	%	\sum_k\|\nabla \mathcal{A}_k(t)\|_{B^{d/2}_{2,\infty}\cap L^{\infty}},\ \text{if}\ 0<s<d/2+1.
  	%	\end{array}\right.
  	%\end{equation*}
  \end{lemm}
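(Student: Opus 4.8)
The plan is to prove the a priori estimate first by a Littlewood--Paley energy method, and then to deduce existence by a regularization/compactness argument and uniqueness by a low-order energy estimate. Throughout, $d=1$, so the sum over spatial directions in Lemma \ref{existence} reduces to a single term.

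For the a priori estimate, I would localize the equation in frequency by applying $\Delta_j$. Setting $U_j=\Delta_jU$, Lemma \ref{existence} gives that each block solves a transport equation with the \emph{regularized} coefficient $\overline{S}_{j-1}\mathcal{A}$ plus a commutator remainder $R_j$:
$$\partial_tU_j+(\overline{S}_{j-1}\mathcal{A})\partial_xU_j=\Delta_jF+R_j.$$
Taking the $L^2$ inner product with $U_j$ and exploiting that $\overline{S}_{j-1}\mathcal{A}$ inherits the symmetry of $\mathcal{A}$ (the multiplier $S_{j-1}$ preserves symmetry of the entries), an integration by parts converts the transport term into $-\tfrac{1}{2}\langle(\partial_x\overline{S}_{j-1}\mathcal{A})U_j,U_j\rangle$, so no derivative of $U_j$ survives. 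This yields the differential inequality
$$\tfrac{1}{2}\tfrac{d}{dt}\|U_j\|_{L^2}^2\le C\|\partial_x\mathcal{A}\|_{L^\infty}\|U_j\|_{L^2}^2+\big(\|\Delta_jF\|_{L^2}+\|R_j\|_{L^2}\big)\|U_j\|_{L^2},$$
to which I apply the Gronwall Lemma \ref{osgood} with $f=\|U_j\|_{L^2}$.

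Next I would multiply the bound on $\|U_j(t)\|_{L^2}$ by $2^{js}$ with $s=\tfrac{3}{2}$ and sum in $\ell^1$ over $j$. The delicate point is the commutator: by the first estimate of Lemma \ref{existence} (with $s=\tfrac{3}{2}$, $r=1$, so $s-1=\tfrac{1}{2}$),
$$2^{js}\|R_j\|_{L^2}\le Cc_j(t)\big(\|\partial_x\mathcal{A}\|_{L^\infty}\|\partial_xU\|_{B^{1/2}_{2,1}}+\|\partial_xU\|_{L^\infty}\|\partial_x\mathcal{A}\|_{B^{1/2}_{2,1}}\big),\qquad \sum_jc_j=1;$$
invoking the embedding $B^{1/2}_{2,1}\hookrightarrow L^\infty$ from Proposition \ref{embedding} together with $\|\partial_xU\|_{B^{1/2}_{2,1}}\lesssim\|U\|_{B^{3/2}_{2,1}}$, the summed commutator is controlled by $C\|\partial_x\mathcal{A}\|_{B^{1/2}_{2,1}}\|U\|_{B^{3/2}_{2,1}}$. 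Since this carries a factor $\|U\|_{B^{3/2}_{2,1}}$, I would fold it, together with the $\|\partial_x\mathcal{A}\|_{L^\infty}$-term, into the multiplicative coefficient $h(t)=C\|\partial_x\mathcal{A}\|_{B^{1/2}_{2,1}}$ in Lemma \ref{osgood} rather than treating it as a source; after summation this produces exactly the claimed bound, with the exponential of $\int C\|\partial_x\mathcal{A}\|_{B^{1/2}_{2,1}}$ and the Duhamel integral of $\|F\|_{B^{3/2}_{2,1}}$. I expect this absorption of the commutator into the exponential to be the main technical obstacle, as it requires tracking the $\ell^1$ summability of $c_j(t)$ uniformly in $t$.

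For existence I would introduce a Friedrichs-type regularization, solving the truncated systems $\partial_tU^n+\mathbb{E}_n(\mathcal{A}\partial_x\mathbb{E}_nU^n)=\mathbb{E}_nF$ (with $\mathbb{E}_n$ the projection onto frequencies $\le n$) by Cauchy--Lipschitz theory in $L^2$; the a priori estimate furnishes uniform bounds in $\mathcal{C}(I;B^{3/2}_{2,1})$, and the Fatou property together with the equation lets me pass to the limit and recover a solution. Uniqueness follows from a standard $L^2$ energy estimate on the difference $W=U_1-U_2$ of two solutions: symmetry gives $\tfrac{1}{2}\tfrac{d}{dt}\|W\|_{L^2}^2\le\tfrac{1}{2}\|\partial_x\mathcal{A}\|_{L^\infty}\|W\|_{L^2}^2$, whence $W\equiv0$ by Gronwall. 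Finally, $\partial_tU=F-\mathcal{A}\partial_xU$ lies in $\mathcal{C}(I;B^{1/2}_{2,1})$ by the product Lemma \ref{product} and $F\in\mathcal{C}(I;B^{3/2}_{2,1})\hookrightarrow\mathcal{C}(I;B^{1/2}_{2,1})$, which yields $U\in\mathcal{C}^1(I;B^{1/2}_{2,1})$; the continuity $U\in\mathcal{C}(I;B^{3/2}_{2,1})$ then follows from the uniform bounds via a standard continuity-in-time argument.
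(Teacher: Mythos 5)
Your proposal is correct and takes essentially the same approach as the source: the paper does not prove this lemma but quotes it from \cite{B-2011}, and your argument --- frequency localization via Lemma \ref{existence} with the smoothed symmetric coefficient $\overline{S}_{j-1}\mathcal{A}$, integration by parts so that only $\partial_x\overline{S}_{j-1}\mathcal{A}$ survives, absorption of the commutator $R_j$ into the Gronwall exponential through $B^{1/2}_{2,1}\hookrightarrow L^{\infty}$ and $\|\partial_xU\|_{B^{1/2}_{2,1}}\lesssim\|U\|_{B^{3/2}_{2,1}}$, Friedrichs regularization for existence, and an $L^2$ energy estimate for uniqueness --- is precisely the Bahouri--Chemin--Danchin proof, the same computation the paper itself reproduces in Step 2 of Section 3 for the nonlinear system \eqref{2.1}. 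One small point to tighten: in your final time-regularity step, $\mathcal{A}$ itself need not belong to $B^{1/2}_{2,1}$ (only $\partial_x\mathcal{A}$ does, and $\mathcal{A}$ is merely bounded), so Lemma \ref{product} does not apply verbatim; use instead the Bony-decomposition variant $\|\mathcal{A}\,\partial_xU\|_{B^{1/2}_{2,1}}\le C\bigl(\|\mathcal{A}\|_{L^{\infty}}+\|\partial_x\mathcal{A}\|_{B^{1/2}_{2,1}}\bigr)\|\partial_xU\|_{B^{1/2}_{2,1}}$, which is harmless in the paper's application since there $\mathcal{A}=A(U^n)\in B^{3/2}_{2,1}$.
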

   \begin{lemm}\label{2continuity}
  	 Let ~$\bar{\mathbb{N}}=\mathbb{N}\cup\{\infty\}$. For $d=1$, consider a sequence $(\mathcal{A}^n)_{n\in\bar{\mathbb N}}$ of continuous bounded functions on $I\times\mathbb{R}$ with values in the set of symmetric $N\times N$ matrices. Assuming that $\partial_x\mathcal{A}^n\in \mathcal{C}([0,T];B^{\frac{1}{2}}_{2,1})$ and there exists a nonnegative integral function $\alpha$ over $I$ such that
  \begin{equation}\label{eqq3}
  \|\partial_x\mathcal{A}^n\|_{B^{\frac{1}{2}}_{2,1}}\le\alpha(t)~~~~for~all~t\in I
  \end{equation}
   and
   \begin{equation}\label{eqq4}
  \mathcal{A}^n-\mathcal{A}^{\infty}\rightarrow 0 ~~as ~n\to\infty,~~~~in~L^1(I,B^{\frac{1}{2}}_{2,1}).
  \end{equation}
   Define by $V^n$ the solutions of
  	\begin{equation}\label{transport}
  	\left\{\begin{array}{l}
  		\partial_tV^n+\mathcal{A}^n\partial_xV^n=F, \\
  		V^n|_{t=t_0}=V_0,
  	\end{array}\right.
  \end{equation}
  	with $F\in\mathcal{C}(I,B^{\frac{1}{2}}_{2,1}),\ V_0\in B^{\frac{1}{2}}_{2,1}$, then $(V^n)_{n\in\bar{\mathbb N}}$ converges to $V^{\infty}$ in $\mathcal{C}([0,T];B^{\frac{1}{2}}_{2,1})$.
  \end{lemm}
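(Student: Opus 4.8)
The plan is to exploit the linearity of the system in the unknown $V$. Writing $W^n:=V^n-V^\infty$ and subtracting the equations for $V^n$ and $V^\infty$ (which share the same $F$ and $V_0$), the flux difference splits as $\mathcal{A}^n\partial_xV^n-\mathcal{A}^\infty\partial_xV^\infty=\mathcal{A}^n\partial_xW^n+(\mathcal{A}^n-\mathcal{A}^\infty)\partial_xV^\infty$, so that $W^n$ solves the linear symmetric system
\begin{equation*}
\partial_tW^n+\mathcal{A}^n\partial_xW^n=G^n,\qquad W^n|_{t=t_0}=0,\qquad G^n:=(\mathcal{A}^\infty-\mathcal{A}^n)\partial_xV^\infty .
\end{equation*}
The whole matter thus reduces to showing that the forcing $G^n$ drives $W^n$ to $0$ in $\mathcal{C}([0,T];B^{\frac12}_{2,1})$.

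First I would record a uniform bound. The $s=\frac12$ analogue of the energy estimate in Lemma \ref{priori estimate} (whose commutator term, controlled by Lemma \ref{existence}, closes because $\partial_x\mathcal{A}^n\in B^{\frac12}_{2,1}\hookrightarrow L^\infty$) combined with hypothesis \eqref{eqq3} shows that all the $V^n$, and hence $V^\infty$, are bounded in $\mathcal{C}([0,T];B^{\frac12}_{2,1})$ by a constant $M$ depending only on $\|V_0\|_{B^{\frac12}_{2,1}}$, $\|F\|$ and $\|\alpha\|_{L^1(I)}$. Applying the same estimate to $W^n$ (with zero data) yields, with $C_\alpha:=\exp(C\|\alpha\|_{L^1(I)})$,
\begin{equation*}
\sup_{t\in[0,T]}\|W^n(t)\|_{B^{\sigma}_{2,1}}\ \lm\ C_\alpha\int_0^T\|G^n(\tau)\|_{B^{\sigma}_{2,1}}\,d\tau
\end{equation*}
at any regularity level $\sigma$ at which the transport estimate is available.

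The main obstacle is that the natural choice $\sigma=\frac12$ is out of reach. Since $V^\infty$ is known only to lie in $B^{\frac12}_{2,1}$, its derivative $\partial_xV^\infty$ sits in $B^{-\frac12}_{2,1}$, and the product of the factor $\mathcal{A}^\infty-\mathcal{A}^n$ with $\partial_xV^\infty$ cannot be estimated in $B^{\frac12}_{2,1}$: writing $G^n=\partial_x\big[(\mathcal{A}^\infty-\mathcal{A}^n)V^\infty\big]-\big(\partial_x(\mathcal{A}^\infty-\mathcal{A}^n)\big)V^\infty$, the second term is controlled by $\|\partial_x(\mathcal{A}^\infty-\mathcal{A}^n)\|_{B^{\frac12}_{2,1}}\|V^\infty\|_{B^{\frac12}_{2,1}}$ via Lemma \ref{product}, a quantity that \eqref{eqq3} keeps bounded but which \eqref{eqq4} does not make small; the only smallness at our disposal, namely $\|\mathcal{A}^\infty-\mathcal{A}^n\|_{L^1(I;B^{\frac12}_{2,1})}\to0$, lives one derivative too low. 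This derivative loss is the crux of the lemma.

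To overcome it I would first run the estimate one derivative below the critical index, at $\sigma=-\frac12$. There multiplication by the $B^{\frac12}_{2,1}$ function $\mathcal{A}^\infty-\mathcal{A}^n$ is bounded on $B^{-\frac12}_{2,1}$ (a dual version of the product estimate in Lemma \ref{product}, read off from Bony's decomposition), so $\|G^n\|_{B^{-\frac12}_{2,1}}\lm\|\mathcal{A}^\infty-\mathcal{A}^n\|_{B^{\frac12}_{2,1}}\|V^\infty\|_{B^{\frac12}_{2,1}}$; integrating in time and invoking \eqref{eqq4} gives $W^n\to0$ in $\mathcal{C}([0,T];B^{-\frac12}_{2,1})$. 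It then remains to upgrade this to the critical space, for which I would split $W^n=S_NW^n+(\mathrm{Id}-S_N)W^n$. The low frequencies are harmless: by Bernstein's inequality (Proposition \ref{Bernstein}), $\|S_NW^n\|_{B^{\frac12}_{2,1}}\lm 2^{N}\|W^n\|_{B^{-\frac12}_{2,1}}\to0$ as $n\to\infty$ for each fixed $N$. For the high frequencies one has $\|(\mathrm{Id}-S_N)W^n\|_{B^{\frac12}_{2,1}}\le\|(\mathrm{Id}-S_N)V^\infty\|_{B^{\frac12}_{2,1}}+\|(\mathrm{Id}-S_N)V^n\|_{B^{\frac12}_{2,1}}$, where the first term vanishes as $N\to\infty$ because $V^\infty\in B^{\frac12}_{2,1}$. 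The genuinely delicate step — the one I expect to be the main difficulty — is to show that the high-frequency tails of the family $\{V^n\}$ are uniformly small, i.e. $\sup_n\|(\mathrm{Id}-S_N)V^n\|_{\mathcal{C}([0,T];B^{\frac12}_{2,1})}\to0$ as $N\to\infty$, since mere uniform boundedness in $B^{\frac12}_{2,1}$ does not rule out frequency concentration. Here I would localize the $V^n$-equation in frequency, perform an $L^2$ energy estimate on $\Delta_jV^n$, multiply by $2^{j/2}$ and sum over $j\ge N$, and close a Grönwall inequality (Lemma \ref{osgood}) for the tail $\sum_{j\ge N}2^{j/2}\|\Delta_jV^n\|_{L^2}$; its forcing comes from the fixed data $V_0$ and source $F$ (whose tails vanish as $N\to\infty$ uniformly in $n$) and from the commutator $[\mathcal{A}^n,\Delta_j]\partial_xV^n$ controlled through Lemma \ref{existence} and \eqref{eqq3}. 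Choosing $N$ large first to make the high-frequency contribution uniformly small and then $n$ large yields $\|W^n\|_{\mathcal{C}([0,T];B^{\frac12}_{2,1})}\to0$, which is the assertion.
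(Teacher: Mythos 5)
Your reduction to $W^n=V^n-V^\infty$, which solves $\partial_tW^n+\mathcal{A}^n\partial_xW^n=(\mathcal{A}^\infty-\mathcal{A}^n)\partial_xV^\infty$ with zero data, is exactly the paper's starting point, and you have correctly isolated the real obstacle: with $V^\infty$ only in $B^{\frac12}_{2,1}$ the source loses a derivative and the estimate cannot be closed at the critical index. From there the two arguments genuinely diverge. The paper does not descend to a weaker norm; it regularizes the \emph{data}. It first proves the claim under the extra assumption $V_0\in B^{\frac32}_{2,1}$, $F\in\mathcal{C}(I;B^{\frac32}_{2,1})$, in which case $\partial_xV^\infty\in B^{\frac12}_{2,1}$ and Lemmas \ref{priori estimate} and \ref{product} give
\begin{equation*}
\|W^n(t)\|_{B^{\frac12}_{2,1}}\le\int_0^t\|\mathcal{A}^\infty-\mathcal{A}^n\|_{B^{\frac12}_{2,1}}\|\partial_xV^\infty\|_{B^{\frac12}_{2,1}}e^{\int_{t'}^{t}C\alpha(t'')dt''}dt'\longrightarrow 0
\end{equation*}
by \eqref{eqq4}; it then treats general data via the three-term splitting $\|V^n-V^\infty\|\le\|V^n-V^n_j\|+\|V^n_j-V^\infty_j\|+\|V^\infty_j-V^\infty\|$, where $V^n_j$ solves the same equation with truncated data $\mathbb{E}_jV_0$, $\mathbb{E}_jF$. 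The outer terms are small \emph{uniformly in $n$} once $j$ is large, because $V^n-V^n_j$ solves the linear system with small data and the a priori estimate is uniform thanks to \eqref{eqq3}; the middle term is covered by the smooth case for each fixed $j$. This purchases precisely what your high-frequency step is struggling to produce, using nothing beyond Lemma \ref{priori estimate}.

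Your alternative route (convergence one derivative lower, then a frequency upgrade) is workable in principle, but as written it has two real gaps. First, $\sigma=-\frac12$ is the endpoint of the admissible range for transport/symmetric-system estimates in $B^{\sigma}_{2,r}$ when $d=1$: the estimate there generally requires the third index $r=\infty$, and the borderline product $B^{\frac12}_{2,1}\times B^{-\frac12}_{2,1}$ needs the full Bony decomposition rather than a naive duality argument; none of this is supplied by the paper's lemmas, which are stated for $s>0$. This is repairable (landing in $B^{-1/2}_{2,\infty}$ still feeds the Bernstein step), but it is not free. Second, and more seriously, the crux of your argument, $\sup_n\|(\mathrm{Id}-S_N)V^n\|_{\mathcal{C}([0,T];B^{1/2}_{2,1})}\to0$, is asserted rather than proved: when you sum the commutator over $j\ge N$, the dangerous contributions pair high frequencies of $\mathcal{A}^n$ with low frequencies of $V^n$, and the hypotheses give no uniform high-frequency decay of $\partial_x\mathcal{A}^n$ (only the bound \eqref{eqq3}, which does not preclude frequency concentration along the sequence). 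One can rescue this because $\Delta_{j'}\mathcal{A}^n$ gains a factor $2^{-j'}$ over $\Delta_{j'}\partial_x\mathcal{A}^n$ by Proposition \ref{Bernstein}, but that computation is exactly the content you have omitted, and it carries the entire weight of the lemma. In short: a legitimately different strategy, but the hard step is left as a sketch, whereas the paper's data-mollification argument sidesteps both issues.
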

  \begin{proof}\
\quad Let $W^n=V^n-V^{\infty}$. Then we have
$$
\partial_tW^n+\mathcal{A}^n\partial_xW^n=(\mathcal{A}^{\infty}-\mathcal{A}^n)\partial_xV^{\infty}.
$$
Let us make the additional assumption that $F\in\mathcal{C}(I,B^{\frac{3}{2}}_{2,1}),\ V_0\in B^{\frac{3}{2}}_{2,1}$. As $V^n(0)=V^{\infty}(0)$, applying Lemma \ref{priori estimate} and \eqref{eqq3}, we have
$$
  \|W^n(t)\|_{B^{\frac{1}{2}}_{2,1}}\le\int_{0}^{t}\|(\mathcal{A}^{\infty}-\mathcal{A}^n)\partial_xV^{\infty}\|_{B^{\frac{1}{2}}_{2,1}}e^{\int_{t'}^{t}C\alpha(t'')dt''}dt'.
$$
Then by lemma \ref{product}, we get
$$
  \|W^n(t)\|_{B^{\frac{1}{2}}_{2,1}}\le\int_{0}^{t}\|\mathcal{A}^{\infty}-\mathcal{A}^n\|_{B^{\frac{1}{2}}_{2,1}}\|\partial_xV^{\infty}\|_{B^{\frac{1}{2}}_{2,1}}e^{\int_{t'}^{t}C\alpha(t'')dt''}dt'.
$$
Taking advantage of \eqref{eqq4}, it is easy to conclude that $V^n$ tends to $V^{\infty}$ in $\mathcal{C}([0,T];B^{\frac{1}{2}}_{2,1})$.

To take the non-smooth case $F\in\mathcal{C}(I,B^{\frac{1}{2}}_{2,1})$, and $V_0\in B^{\frac{1}{2}}_{2,1}$. We can write that

\begin{equation}\label{eqqqq5}
\|W^n(t)\|_{B^{\frac{1}{2}}_{2,1}}\le\|V^n(t)-V_j^n(t)\|_{B^{\frac{1}{2}}_{2,1}}+\|V_j^n(t)-V_j^{\infty}(t)\|_{B^{\frac{1}{2}}_{2,1}}+\|V_j^{\infty}(t)-V^{\infty}(t)\|_{B^{\frac{1}{2}}_{2,1}},
\end{equation}
where $V_j^n$ satisfy the following equation
\begin{equation*}
  	\left\{\begin{array}{l}
  		\partial_tV_j^n+\mathcal{A}^n\partial_xV_j^n=\mathbb{E}_jF, \\
  		V_j^n|_{t=t_0}=\mathbb{E}_jV_0.
  	\end{array}\right.
  \end{equation*}
Since $V^n-V_j^n$ solves
\begin{equation*}
  	\left\{\begin{array}{l}
  		\partial_tV+\mathcal{A}^n\partial_xV=F-\mathbb{E}_jF, \\
  		V|_{t=t_0}=V_0-\mathbb{E}_jV_0,
  	\end{array}\right.
  \end{equation*}
then by Lemma \ref{priori estimate} and \eqref{eqq3}, we have
$$
  \|V^n(t)-V_j^n(t)\|_{B^{\frac{1}{2}}_{2,1}}\le e^{\int_{t'}^{t}C\alpha(t'')dt''}\bigg(\|V_0-\mathbb{E}_jV_0\|_{B^{\frac{1}{2}}_{2,1}}+\int_{0}^{t}\|F-\mathbb{E}_jF\|_{B^{\frac{1}{2}}_{2,1}}dt'\bigg).
$$
By virtue of $\mathbb{E}_jV_0\to V_0$ and $\mathbb{E}_jF\to F$ in $B^{\frac{1}{2}}_{2,1}$, we can find some $j\in\bar{\mathbb N}$ such that $\|V^n(t)-V_j^n(t)\|_{B^{\frac{1}{2}}_{2,1}}\le \epsilon/3$. For fixed $j$, let $n$ tends to infinity so that the second term in the right-hand side of \eqref{eqqqq5} is less than $\epsilon/3$. Thus, we conclude that $(V^n)_{n\in\bar{\mathbb N}}$ converges to $V^{\infty}$ in $\mathcal{C}([0,T];B^{\frac{1}{2}}_{2,1})$.
\end{proof}
 Moreover, we state some properties of conformable fractional derivatives.
 \begin{lemm}\cite{K-2014}
 	Let $\beta\in]0,1]$ and $a,b\in\mathbb{R}$, then
 	$$
 	D_t^{\beta}(af+bg)=aD_t^{\beta}f+bD_t^{\beta}g,
 	$$
 	$$
 	D_t^{\beta}(fg)=fD_t^{\beta}g+gD_t^{\beta}f,
 	$$
 	$$
 	D_t^{\beta}(\frac{f}{g})=\frac{fD_t^{\beta}g+gD_t^{\beta}f}{g^2}.
 	$$

 \end{lemm}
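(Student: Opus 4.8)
The plan is to reduce all three identities to the classical differentiation rules via the single auxiliary formula
\begin{equation*}
D_t^{\beta}f(t)=t^{1-\beta}f'(t),
\end{equation*}
valid at any point $t>0$ where $f$ is (ordinarily) differentiable. First I would establish this formula directly from Definition \ref{Def1}: in the difference quotient set $h=\varepsilon\,t^{1-\beta}$, so that for fixed $t>0$ we have $h\to 0$ exactly as $\varepsilon\to 0$, and
\begin{equation*}
D_t^{\beta}f(t)=\lim_{\varepsilon\to 0}\frac{f(t+\varepsilon t^{1-\beta})-f(t)}{\varepsilon}=t^{1-\beta}\lim_{h\to 0}\frac{f(t+h)-f(t)}{h}=t^{1-\beta}f'(t).
\end{equation*}
This step is routine but is the conceptual heart of the argument: it converts the exotic conformable derivative into a simple weight $t^{1-\beta}$ times the ordinary derivative, after which every structural rule is inherited from calculus.

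With the reduction formula in hand, the three claims follow by direct substitution. For linearity, $D_t^{\beta}(af+bg)=t^{1-\beta}(af+bg)'=a\,t^{1-\beta}f'+b\,t^{1-\beta}g'=aD_t^{\beta}f+bD_t^{\beta}g$, where I use linearity of the ordinary derivative and of the limit. For the product rule I would apply the classical Leibniz rule, $D_t^{\beta}(fg)=t^{1-\beta}(fg)'=t^{1-\beta}(f'g+fg')=g\,t^{1-\beta}f'+f\,t^{1-\beta}g'=fD_t^{\beta}g+gD_t^{\beta}f$. For the quotient, assuming $g(t)\neq 0$, the classical quotient rule gives
\begin{equation*}
D_t^{\beta}\!\left(\frac{f}{g}\right)=t^{1-\beta}\left(\frac{f}{g}\right)'=t^{1-\beta}\,\frac{f'g-fg'}{g^{2}}=\frac{g\,D_t^{\beta}f-f\,D_t^{\beta}g}{g^{2}}.
\end{equation*}

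There is essentially no serious obstacle here once the reduction formula is proved; the only points requiring care are the standing hypotheses. I would state explicitly that the identities hold at points where $f$ and $g$ are differentiable (so that the ordinary derivatives exist and the limit manipulation is justified), and that the quotient identity additionally requires $g(t)\neq 0$. I would also flag that the displayed quotient rule should carry a minus sign in the numerator, $\bigl(g\,D_t^{\beta}f-f\,D_t^{\beta}g\bigr)/g^{2}$, rather than the plus sign as written, since the sign is forced by the classical quotient rule; the linearity and product identities are exactly as stated. An alternative route that avoids the reduction formula would be to expand each difference quotient and add and subtract a cross term (as in the usual first-principles proof of the Leibniz rule), but this is strictly more laborious and the weighted-derivative identity is both cleaner and directly reusable elsewhere in the paper.
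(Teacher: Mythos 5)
Your proof is correct. There is nothing in the paper to compare it against: the lemma is quoted, without proof, from the cited reference \cite{K-2014}, and your reduction formula $D_t^{\beta}f(t)=t^{1-\beta}f'(t)$ is precisely the key identity established in that source, so deriving linearity, the product rule, and the quotient rule by substitution is the standard (and cleanest) route. Two remarks. First, you are right to flag the sign: the quotient rule as printed in the paper is incorrect, and the correct numerator is $g\,D_t^{\beta}f-f\,D_t^{\beta}g$, exactly as your derivation forces (the original reference states it with the minus sign, so this is a transcription error in the paper, not a different convention). Second, your concern about standing hypotheses is easily discharged: for fixed $t>0$ the substitution $h=\varepsilon t^{1-\beta}$ is an invertible linear rescaling of $\varepsilon$, so $f$ is $\beta$-differentiable at $t$ in the sense of Definition \ref{Def1} if and only if it is ordinarily differentiable there; hence assuming ordinary differentiability of $f$ and $g$ (and $g(t)\neq 0$ for the quotient) loses no generality on $(0,\infty)$. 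At $t=0$ the conformable derivative is defined as the limit $\lim_{t\to 0^+}(D_t^{\beta}f)(t)$, so the three identities extend to $t=0$ whenever the relevant one-sided limits exist. With the sign corrected and these hypotheses stated, your argument is complete.
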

 Let $I^0_{\beta}$ be the fractional integral given by
 $$I^0_{\beta}f(t)=\int_{0}^{t}x^{\beta-1}f(x)dx\quad \text{for} \quad \beta\in]0,1].$$
 The following lemma gives the relationship between $D_t^{\beta}$ and $I^0_{\beta}$.
 \begin{lemm}\label{fractional}\cite{K-2014}
 	Assume that $f:[a,\infty]\to\mathbb{R}$ be differentiable and $\beta\in]0,1]$. Then, for all $t>0$, we have
 	$$
 	I^0_{\beta}D_t^{\beta}(f)(t)=f(t)-f(0).
 	$$
 \end{lemm}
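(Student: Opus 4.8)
The plan is to reduce the conformable fractional derivative to a classical one weighted by a power of $t$, after which the asserted identity becomes an immediate consequence of the fundamental theorem of calculus. The central observation, which I would establish first, is the representation
\begin{equation*}
D^{\beta}_{t}f(t)=t^{1-\beta}f'(t)\qquad\text{for all }t>0.
\end{equation*}
To prove this I would perform the change of variable $h=\varepsilon t^{1-\beta}$ inside the defining limit from Definition \ref{Def1}. Since $t>0$ is fixed, $\varepsilon\to0$ is equivalent to $h\to0$, and $\varepsilon=h\,t^{\beta-1}$, so that
\begin{equation*}
D^{\beta}_{t}f(t)=\lim_{\varepsilon\to0}\frac{f(t+\varepsilon t^{1-\beta})-f(t)}{\varepsilon}
=\lim_{h\to0}\frac{f(t+h)-f(t)}{h\,t^{\beta-1}}
=t^{1-\beta}\lim_{h\to0}\frac{f(t+h)-f(t)}{h}
=t^{1-\beta}f'(t),
\end{equation*}
where the last equality uses precisely the differentiability of $f$ at $t$.

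With this representation in hand, the second step is a direct substitution into the fractional integral. Using the definition $I^0_{\beta}g(t)=\int_0^t x^{\beta-1}g(x)\,dx$ with $g(x)=D^{\beta}_{x}f(x)=x^{1-\beta}f'(x)$, the weight $x^{\beta-1}$ cancels exactly against the factor $x^{1-\beta}$ coming from the conformable derivative, leaving
\begin{equation*}
I^0_{\beta}D^{\beta}_{t}(f)(t)=\int_0^t x^{\beta-1}\,x^{1-\beta}f'(x)\,dx=\int_0^t f'(x)\,dx=f(t)-f(0),
\end{equation*}
where the final equality is the fundamental theorem of calculus. This yields the claim.

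The computation itself is routine; the only points requiring care are at the two limiting operations. First, the change of variable in the limit is legitimate because $t$ is a fixed strictly positive number, so $t^{1-\beta}$ and $t^{\beta-1}$ are finite nonzero constants and the substitution is a genuine bijection of punctured neighborhoods of $0$. Second, for the final integral identity one must ensure that $f'$ is integrable on $[0,t]$ and that the fundamental theorem of calculus applies up to the endpoint $x=0$; this is automatic under the standing hypothesis that $f$ is differentiable (interpreted, as is customary for this lemma, with $f'$ continuous, or more generally $f$ absolutely continuous). I would note explicitly that the apparent singularity of the weight $x^{\beta-1}$ at the origin is harmless, since it is precisely annihilated by the conformable derivative before integration, so no improper-integral convergence issue arises. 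This is the mild subtlety of the proof, and once it is dispatched the identity follows.
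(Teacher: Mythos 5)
Your proof is correct and coincides with the argument in the cited source: the paper itself states Lemma \ref{fractional} without proof, citing \cite{K-2014}, and the proof there is exactly your two-step reduction, first establishing $D^{\beta}_{t}f(t)=t^{1-\beta}f'(t)$ for differentiable $f$ and then cancelling the weight $x^{\beta-1}$ inside $I^0_{\beta}$ so that the fundamental theorem of calculus finishes the job. Your remarks on the legitimacy of the change of variable and on the integrability of $f'$ near the origin are appropriate refinements of the standard argument, not deviations from it.
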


 {\bf Notation}. Since all function spaces in the following sections are over $\mathbb{R}$, for simplicity, we drop $\mathbb{R}$ in the notation of function spaces if there is no ambiguity.

\section{Local well-posedness for $\beta=1$}
~~~~In this section, we establish the local well-posedness of Eq.\eqref{1.4} in critical Besov spaces $B^{\frac{3}{2}}_{2,1}$ for $\beta=1$. We will prove the Theorem \ref{them1} in six steps.\\
$\bf{Step~1:~Approximate~Solutions}$

First, we construct approximate solutions which are smooth solutions of some quasilinear symmetric equations. Set $U^0=0$ and define a sequences $(U^n)_{n\in\mathbb N}$ of smooth functions by solving the following linear symmetric equations:
\begin{equation}\label{2.1}
	\left\{\begin{array}{l}
	\partial_{t}U^{n+1}+A(U^n)\partial_{x}U^{n+1}=M(x),\\
	U^{n+1}|_{t=0}=S_{n+1}U_0.
	\end{array}\right.
\end{equation}
Assuming that $U^n\in L^{\infty}([0,T],B^{\frac{3}{2}}_{2,1})$ for all $T>0$, we can check that $A(U^n)\in L^{\infty}([0,T],B^{\frac{3}{2}}_{2,1})$. Taking advantage of the theory of the linear symmetric system, we obtain that $U^{n+1}$ of \eqref{2.1} in $L^{\infty}([0,T],B^{\frac{3}{2}}_{2,1})$.\\
$\bf{Step~2:~Uniform~Bounds}$

Next, we are going to prove uniform estimates in $B^{\frac{3}{2}}_{2,1}$ for the approximate solution $U^n$. We claim that some constant $C_0$ can be found such that
\begin{equation}\label{2.2}
	 T<\min\bigg\{1,\frac{\rm{ln}2}{2C_0\big(||U_0||_{B^{\frac{3}{2}}_{2,1}}+C_0||\theta||_{B^{\frac{3}{2}}_{2,1}}\big)}\bigg\}\Longrightarrow||U^n||_{L^{\infty}([0,T];B^{\frac{3}{2}}_{2,1})}\le 2(||U_0||_{B^{\frac{3}{2}}_{2,1}}+C_0||\theta||_{B^{\frac{3}{2}}_{2,1}}).
\end{equation}
We will proceed by induction. As $U^0=0$, the inequality \eqref{2.2} holds true for $n=0$. Suppose that it holds for some $n$. We estimates $U^{n+1}$ by lemma \ref{existence}. Applying $\Delta_j (j\ge -1)$ to \eqref{2.1},  we get
\begin{equation*}
	\partial_t\Delta_j U^{n+1}+(\overline{S}_{j-1}A(U^n))\partial_x\Delta_j U^{n+1}=R_j^n+\Delta_j M,
\end{equation*}
where $R_j^n$ satisfies,
\begin{equation*}
	||R_j^n||_{L^2}\le Cc^n_j(t)\cdot 2^{-\frac{3}{2}j}\bigg(||\nabla U^n(t)||_{L^{\infty}}||\nabla U^{n+1}(t)||_{B^{\frac{1}{2}}_{2,1}}+||\nabla U^{n+1}(t)||_{L^{\infty}}||\nabla U^{n}(t)||_{B^{\frac{1}{2}}_{2,1}}\bigg),
\end{equation*}
with $||(c^n_j(t))||_{l^2}\le 1$. Combining the above inequality and the fact that
\begin{equation*}
	||\nabla\overline{S}_{j-1}A(U^n)||_{L^{\infty}}\le C||\nabla U^n||_{L^{\infty}},
\end{equation*}
we have
\begin{equation}\label{eq1}\begin{aligned}
	\frac{1}{2}\frac{d}{dt}||\Delta_j U^{n+1}||^2_{L^{2}}\le &C||\nabla U^n||_{L^{\infty}}||\Delta_j U^{n+1}||^2_{L^{2}}\\
&+C||R_j^n||_{L^2}||\Delta_j U^{n+1}||_{L^{2}}+||\partial_x\Delta_j\theta||_{L^{2}}||\Delta_j U^{n+1}||_{L^{2}}.
\end{aligned}\end{equation}
As $B^{\frac{1}{2}}_{2,1}\hookrightarrow L^{\infty}$, we get
\begin{equation*}\begin{aligned}
		\frac{1}{2}\frac{d}{dt}||\Delta_j U^{n+1}||^2_{L^{2}}\le &C||U^n||_{B^{\frac{3}{2}}_{2,1}}||\Delta_j U^{n+1}||^2_{L^{2}}\\
		&+\big(C c_j(t)2^{-\frac{3}{2}j}||U^n||_{B^{\frac{3}{2}}_{2,1}}||U^{n+1}||_{B^{\frac{3}{2}}_{2,1}}+Cc_j(t)2^{-\frac{3}{2}j}||\theta||_{B^{\frac{3}{2}}_{2,1}}\big)||\Delta_j U^{n+1}||_{L^{2}}.
\end{aligned}\end{equation*}
Let $\varepsilon>0$, then from the previous inequality, we infer that
\begin{equation*}\begin{aligned}
		\frac{d}{dt}\big(||\Delta_j U^{n+1}||^2_{L^{2}}+\varepsilon\big)^{\frac{1}{2}}\le &C||U^n||_{B^{\frac{3}{2}}_{2,1}}||\Delta_j U^{n+1}||_{L^{2}}\\
		&+C c_j(t)2^{-\frac{3}{2}j}(||U^n||_{B^{\frac{3}{2}}_{2,1}}||U^{n+1}||_{B^{\frac{3}{2}}_{2,1}}+||\theta||_{B^{\frac{3}{2}}_{2,1}}).
\end{aligned}\end{equation*}
Because $||\Delta_j U^{n+1}||_{L^{2}}\le c_j(t)2^{-\frac{3}{2}j}||U^{n+1}||_{B^{\frac{3}{2}}_{2,1}}$, we thus get
\begin{equation*}
	\frac{d}{dt}\big(||\Delta_j U^{n+1}||^2_{L^{2}}+\varepsilon\big)^{\frac{1}{2}}\le C c_j(t)2^{-\frac{3}{2}j}(||U^n||_{B^{\frac{3}{2}}_{2,1}}||U^{n+1}||_{B^{\frac{3}{2}}_{2,1}}+||\theta||_{B^{\frac{3}{2}}_{2,1}}),
\end{equation*}
which leads to
\begin{equation*}
	\big(||\Delta_j U^{n+1}||^2_{L^{2}}+\varepsilon\big)^{\frac{1}{2}}\le \big(||\Delta_j U_0||^2_{L^{2}}+\varepsilon\big)^{\frac{1}{2}}+C 2^{-\frac{3}{2}j}\int_0^t c_j(t')\big(||U^n||_{B^{\frac{3}{2}}_{2,1}}||U^{n+1}||_{B^{\frac{3}{2}}_{2,1}}+||\theta||_{B^{\frac{3}{2}}_{2,1}}\big)dt'.
\end{equation*}
Let $\varepsilon$ tends to 0, we end up with
\begin{equation*}
	||\Delta_j U^{n+1}||_{L^{2}}\le||\Delta_j U_0||_{L^{2}}+C 2^{-\frac{3}{2}j}\int_0^t c_j(t')\big(||U^n||_{B^{\frac{3}{2}}_{2,1}}||U^{n+1}||_{B^{\frac{3}{2}}_{2,1}}+||\theta||_{B^{\frac{3}{2}}_{2,1}}\big)dt'.
\end{equation*}
Next, we multiply both sides of the above inequality by $2^{\frac{3}{2}j}$ and then sum over $j$ to obtain
\begin{equation*}\begin{aligned}
		||U^{n+1}||_{B^{\frac{3}{2}}_{2,1}}\le ||U_0||_{B^{\frac{3}{2}}_{2,1}}+C\int_0^t\big(||U^n||_{B^{\frac{3}{2}}_{2,1}}||U^{n+1}||_{B^{\frac{3}{2}}_{2,1}}+||\theta||_{B^{\frac{3}{2}}_{2,1}}\big)dt'.
\end{aligned}\end{equation*}
Using the Gronwall lemma, we have that
\begin{equation*}\begin{aligned}
		||U^{n+1}||_{B^{\frac{3}{2}}_{2,1}}\le (||U_0||_{B^{\frac{3}{2}}_{2,1}}+Ct||\theta||_{B^{\frac{3}{2}}_{2,1}})e^{C\int_0^t||U^n||_{B^{\frac{3}{2}}_{2,1}}dt'}.
\end{aligned}\end{equation*}
Thanks to the induction hypothesis, for all $t\in[0,T]$, we deduce that
\begin{equation*}\begin{aligned}
		||U^{n+1}||_{B^{\frac{3}{2}}_{2,1}}\le (||U_0||_{B^{\frac{3}{2}}_{2,1}}+CT||\theta||_{B^{\frac{3}{2}}_{2,1}})\exp\bigg(\int_0^t{2C(||U_0||_{B^{\frac{3}{2}}_{2,1}}+C_0||\theta||_{B^{\frac{3}{2}}_{2,1}})}dt'\bigg),
\end{aligned}\end{equation*}
so that choosing $C_0\ge C$, where $C$ is the constant that appears in the above inequality, we deduce that
\begin{equation*}
	||U^{n+1}||_{L^{\infty}([0,T];B^{\frac{3}{2}}_{2,1})}\le 2\bigg(||U_0||_{B^{\frac{3}{2}}_{2,1}}+C_0||\theta||_{B^{\frac{3}{2}}_{2,1}}\bigg).
\end{equation*}
Therefore, $(U^n)_{n\in\mathbb N}$ is bounded in $L^{\infty}([0,T],B^{\frac{3}{2}}_{2,1})$.\\
$\bf{Step~3:~Convergence}$

 From now on, we are going to show that $(U^n)_{n\in\mathbb N}$ is a Cauchy sequence in $L^{\infty}([0,T];L^2)$. By \eqref{2.1}, we have
\begin{equation*}
	\partial_t(U^{n+1}-U^n)+A(U^n)\partial_x(U^{n+1}-U^n)=-\big(A(U^{n+1})-A(U^n)\big)\partial_x(U^n).
\end{equation*}
By the standard energy estimate, we obtain
\begin{equation*}\begin{aligned}
		||U^{n+1}-U^n||_{L^{\infty}_T(L^2)}\le&||\Delta_n U_0||_{L^2}\\
		 &+CT\bigg(||U_0||_{B^{\frac{3}{2}}_{2,1}}+||\theta||_{B^{\frac{3}{2}}_{2,1}}\bigg)\bigg(||U^{n+1}-U^n||_{L^{\infty}_T(L^2)}+||U^n-U^{n-1}||_{L^{\infty}_T(L^2)}\bigg).
\end{aligned}\end{equation*}
Define $V_{n+1}=||U^{n+1}-U^n||_{L^{\infty}_T(L^2)}$. Assuming that $4CT(||U_0||_{B^{\frac{3}{2}}_{2,1}}+||\theta||_{B^{\frac{3}{2}}_{2,1}})\le1$, we can get
\begin{equation*}
	V_{n+1}\le\frac{4}{3}||\Delta_n U_0||_{L^2}+\frac{1}{3}V_n.
\end{equation*}
As $||\Delta_n U_0||_{L^2}\le C2^{-ns}$, this implies that $\sum V_n$ is convergent. Then we can deduce that $(U^n)_{n\in\mathbb N}$ is a Cauchy sequence in $L^{\infty}([0,T];L^2)$, thus an interpolation argument ensures that $(U^n)_{n\in\mathbb N}$ is a Cauchy sequence in $L^{\infty}([0,T];B^{s}_{2,1})$ for any $s<\frac{3}{2}$. The limit $U$ of $(U^n)_{n\in\mathbb N}$ is obviously a solution of \eqref{1.4}. By the Fatou property for the Besov space $B^{\frac{3}{2}}_{2,1}$, we obtain that $U$ belongs to $L^{\infty}([0,T];B^{\frac{3}{2}}_{2,1})$. By virtue of \eqref{1.4}, we get that $\partial_tU\in L^{\infty}([0,T];B^{s}_{2,1})$, then $U\in \mathcal{C}([0,T];B^{s}_{2,1})$. In summary, we see that $U$ belongs to $L^{\infty}([0,T];B^{\frac{3}{2}}_{2,1})\cap\mathcal{C}([0,T];B^s_{2,1})\cap \mathcal{C}^1([0,T];B^{s-1}_{2,1})$ for any $s<\frac{3}{2}$.\\
$\bf{Step~4:~Existence}$

Now we check that $U\in\mathcal{C}([0,T];B^{\frac{3}{2}}_{2,1})$. By lemma \ref{existence}, $\Delta_j U$ satisfies
\begin{equation*}
	\left\{
	\begin{aligned}
		&\partial_t\Delta_j U+(\overline{S}_{j-1}A(U))\partial_x\Delta_j U=R_j+\Delta_j M,\\
		&\Delta_j U|_{t=0}=\Delta_j U_0,
	\end{aligned}
	\right.
\end{equation*}
with
\begin{equation*}
	||R_j^n||_{L^2}\le Cc_j(t)2^{-\frac{3}{2}j}||\nabla U(t)||_{L^{\infty}}||\nabla U(t)||_{B^{\frac{1}{2}}_{2,1}},
\end{equation*}
\begin{equation*}
	||\Delta_j M||_{L^2}\le Cc_j(t)2^{-\frac{3}{2}j}||\theta||_{B^{\frac{3}{2}}_{2,1}}.
\end{equation*}
\iffalse
By an $L^2$ estimate and time integration, we have
\begin{equation*}
	||\Delta_j U(t)||^2_{L^{2}}\le||\Delta_j U_0||_{L^{2}}+C 2^{-3j}\int_0^t c^2_j(t')\big(||\nabla U(t')||_{L^{\infty}}||\nabla U(t')||_{B^{\frac{1}{2}}_{2,1}}+||\theta||_{B^{\frac{3}{2}}_{2,1}}\big)dt'.
\end{equation*}
Multiplying both sides by $2^{3j}$ and then summing in $j$, we deduce that for all $t\in[0,T]$,
\fi
By a similar calculation as in $\bf{Step~2}$, we obtain
\begin{equation*}
	\sum_{j\ge-1}2^{\frac{3}{2}j}||\Delta_j U||_{L_t^{\infty}(L^{2})}\le 2\bigg(||U_0||_{B^{\frac{3}{2}}_{2,1}}+C_0||\theta||_{B^{\frac{3}{2}}_{2,1}}\bigg).
\end{equation*}
Then consider a positive $\varepsilon$, we can find some $j_0$ such that
\begin{equation*}
	\sum_{j\ge j_0}2^{\frac{3}{2}j}||\Delta_j U||_{L_T^{\infty}(L^{2})}\le \frac{\varepsilon}{4}.
\end{equation*}
Thus, we have
\begin{equation*}\begin{aligned}
		||U(t)-U(t')||_{B^{\frac{3}{2}}_{2,1}}\le&2\sum_{j\ge j_0}2^{\frac{3}{2}j}||\Delta_j U||_{L_T^{\infty}(L^{2})}+\sum_{j< j_0}2^{\frac{3}{2}j}||\Delta_j\big(U(t)-U(t')\big)||_{L_T^{\infty}(L^{2})}\\
		&\le C2^{\frac{3}{2}j_0}||U(t)-U(t')||_{L^{2}}+\frac{\varepsilon}{2}.
\end{aligned}\end{equation*}
From the above inequality and $\partial_tU\in \mathcal{C}^1([0,T];B^{\frac{1}{2}}_{2,1})$, we can deduce that $U$ belongs to $\mathcal{C}([0,T];B^{\frac{3}{2}}_{2,1})$. Thus, we get the existence of a solution for \eqref{1.4}.\\
$\bf{Step~5:~Uniqueness}$

We prove the uniqueness of the solution. Assume that $U$ and $V$ are two solutions of \eqref{1.4} with initial data $U_0$ and $V_0$, respectively. Then we have
\begin{equation*}
	\partial_t(U-V)+A(U)\partial_x(U-V)=-\big(A(U)-A(V)\big)\partial_x V.
\end{equation*}
The energy estimate and Gronwall's lemma ensure that
\begin{equation}\label{eq2}
	||U(t)-V(t)||_{L^2}\le||U_0-V_0||_{L^2}\exp\bigg(C\int_0^t{(||\nabla U(t')||_{L^{\infty}}+||\nabla V(t')||_{L^{\infty}})}dt'\bigg).
\end{equation}
From the above inequality, we prove the uniqueness.\\
$\bf{Step~6:~The~Continuous~Dependent}$

Assume that $U_0^n$ tends to $U_0^{\infty}$ in $B^{\frac{3}{2}}_{2,1}$, we find the solution $U^n$, $U^{\infty}$ of \eqref{2.1} with a common lifespan $T$. By $\bf{Step~1-Step~5}$, we have $U^n$, $U^{\infty}$ are uniformly bounded in $L^{\infty}([0,T],B^{\frac{3}{2}}_{2,1})$.

Taking advantage of the interpolation inequality, we see that $U^n\to U^{\infty}$ in $\mathcal{C}([0,T];B^{s}_{2,1})$ for any $s<\frac{3}{2}$. In order to prove $U^n\to U^{\infty}$ in $\mathcal{C}([0,T];B^{\frac{3}{2}}_{2,1})$, we only need to prove $\partial_x U^n\to\partial_x U^{\infty}$ in $\mathcal{C}([0,T];B^{\frac{1}{2}}_{2,1})$. Let $\overline{U}^n=\partial_x U^n$ and $\overline{U}^{\infty}=\partial_x U^{\infty}$. Note that $\overline{U}^n$ solves the following linear equation:
\begin{equation*}\label{4.1}
	\left\{
	\begin{aligned}
		&\partial_{t}\overline{U}^n+A(U^n)\partial_{x}\overline{U}^n=\partial_x(M(x))-\partial_x[A(U^n)]\partial_x\overline{U}^n,\\
		&\overline{U}^n|_{t=0}=\partial_{x}U^n_0.
	\end{aligned}
	\right.
\end{equation*}
Let $F^n=\partial_x(M(x))-\partial_x[A(U^n)]\partial_x\overline{U}^n$ and split $\overline{U}^n$ into $W^n+Z^n$ such that
\begin{equation*}
	\left\{
	\begin{aligned}
		&\partial_{t}Z^n+A(U^n)\partial_{x}Z^n=F^n-F^{\infty},\\
		&Z^n|_{t=0}=\partial_{x}U^n_0-\partial_{x}U^{\infty}_0
	\end{aligned}
	\right.
\end{equation*}
and
\begin{equation*}
	\left\{
	\begin{aligned}
		&\partial_{t}W^n+A(U^n)\partial_{x}W^n=F^{\infty},\\
		&W^n|_{t=0}=\partial_{x}U^{\infty}_0.
	\end{aligned}
	\right.
\end{equation*}
Because $(U^n)_{n\in\mathbb N}$ is bounded in $\mathcal{C}([0,T];B^{\frac{3}{2}}_{2,1})$, this implies that $(\partial_x A(U^n))_{n\in\mathbb N}$ is bounded in $\mathcal{C}([0,T];B^{\frac{1}{2}}_{2,1})$ and
\begin{equation*}
	F^n-F^{\infty}=-\partial_x\big[(A(U^n)-A(U^{\infty})\big]\partial_x U^n-\partial_xA(U^{\infty})\partial_x(U^n-U^{\infty}).
\end{equation*}
Then by the definition of $A$, Proposition \ref{embedding} and Lemma \ref{product}, we have
\begin{equation*}\begin{aligned}
		||F^n-F^{\infty}||_{B^{\frac{1}{2}}_{2,1}}&\le C\big(||\overline{U}^n||_{B^{\frac{1}{2}}_{2,1}}+||\overline{U}^{\infty}||_{B^{\frac{1}{2}}_{2,1}}\big)||\overline{U}^n-\overline{U}^{\infty}||_{B^{\frac{1}{2}}_{2,1}}\\
		&\le C\big(||\overline{U}^n||_{B^{\frac{1}{2}}_{2,1}}+||\overline{U}^{\infty}||_{B^{\frac{1}{2}}_{2,1}}\big)\big(||Z^n||_{B^{\frac{1}{2}}_{2,1}}+||W^n-W^{\infty}||_{B^{\frac{1}{2}}_{2,1}}\big)
\end{aligned}\end{equation*}
Next, according to Lemma \ref{priori estimate}, we obtain that for $t\in[0,T]$,
\begin{equation*}
	||Z^n(t)||_{B^{\frac{1}{2}}_{2,1}}\le \exp\big({C\int^t_0||\partial_x A(U^n)||_{B^{\frac{1}{2}}_{2,1}}d\tau}\big)\bigg(||V_0^n-V_0^{\infty}||_{B^{\frac{1}{2}}_{2,1}}+\int^t_0||F^n-F^{\infty}||_{B^{\frac{1}{2}}_{2,1}}d\tau\bigg).
\end{equation*}
It follows that
\begin{equation*}\begin{aligned}
		||Z^n(t)||_{B^{\frac{1}{2}}_{2,1}}\le& e^{CKt}\bigg(||V_0^n-V_0^{\infty}||_{B^{\frac{1}{2}}_{2,1}}\\
		 &+C\int^t_0\big(||V^n||_{B^{\frac{1}{2}}_{2,1}}+||V^{\infty}||_{B^{\frac{1}{2}}_{2,1}}\big)\times\big(||Z^n||_{B^{\frac{1}{2}}_{2,1}}+||W^n-W^{\infty}||_{B^{\frac{1}{2}}_{2,1}}\big)d\tau\bigg).
\end{aligned}\end{equation*}
Using the facts that

-- $(\overline{U}^n)_{n\in\mathbb N}$ is bounded in $\mathcal{C}([0,T];B^{\frac{1}{2}}_{2,1})$,

-- $\overline{U}_0^n$ tends to $\overline{U}^{\infty}_0$ in $B^{\frac{1}{2}}_{2,1}$,

-- $W^n$ tends to $W^{\infty}$ in $B^{\frac{1}{2}}_{2,1}$,\\
and then applying the Gronwall lemma, we conclude that $Z^n$ tends to 0 in $\mathcal{C}([0,T];B^{\frac{1}{2}}_{2,1})$.

Finally, we verify that
\begin{equation}\label{4.11}\begin{aligned}
		 ||\overline{U}_n-\overline{U}_{\infty}||_{L^{\infty}([0,T];B^{\frac{1}{2}}_{2,1})}&\le||Z^n-Z^{\infty}||_{B^{\frac{1}{2}}_{2,1}}+||W^n-W^{\infty}||_{B^{\frac{1}{2}}_{2,1}}\\
		&\le||Z^n||_{B^{\frac{1}{2}}_{2,1}}+||W^n-W^{\infty}||_{B^{\frac{1}{2}}_{2,1}},
\end{aligned}\end{equation}
which infers that
\begin{equation*}
	\partial_xU^n\to\partial_xU^{\infty}~~~\rm{in}~~~\mathcal{C}([0,T];B^{\frac{1}{2}}_{2,1}).
\end{equation*}
Hence, we prove the continuous dependence of \eqref{2.1} in $\mathcal{C}([0,T];B^{\frac{3}{2}}_{2,1})$.
Consequently, combining with $\bf{Step~1-Step~6}$, we finish the proof of Theorem \ref{them1}. $\hfill\Box$

Moreover, we prove a blow-up criterion.

\begin{theo}\label{thm3}
	\quad Let $U_0$ be in $B^{\frac{3}{2}}_{2,1}$, $\theta$ be in $B^{\frac{3}{2}}_{2,1}$ and let $T^*$ be the lifespan of the solution to \eqref{2.1}. If $T^*<\infty$, then we have
\begin{equation*}
	\int_0^{T^*}{||\partial_x U(t)||_{L^{\infty}}}dt=\infty.
\end{equation*}
\end{theo}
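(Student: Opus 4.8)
The plan is to argue by contradiction via a continuation (blow-up) argument. Suppose that $T^*<\infty$ but, contrary to the claim, $\int_0^{T^*}\|\partial_x U(t)\|_{L^\infty}\,dt<\infty$. I would first derive an a priori bound showing that $\|U(t)\|_{B^{3/2}_{2,1}}$ stays finite as $t\uparrow T^*$, controlled solely by $\int_0^t\|\partial_x U\|_{L^\infty}\,d\tau$, and then invoke the local existence result (Theorem \ref{them1}) to restart the solution from a time close to $T^*$ and push it strictly beyond $T^*$, contradicting the maximality of the lifespan.

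For the a priori estimate I would repeat the computation of $\textbf{Step~2}$ (equivalently $\textbf{Step~4}$) of the proof of Theorem \ref{them1}, now with the genuine solution $U$ in place of the iterates. Applying $\Delta_j$ to \eqref{2.1} and using Lemma \ref{existence}, $\Delta_j U$ solves a localized symmetric system with source $R_j+\Delta_j M$, where the critical-case remainder bound reads $\|R_j\|_{L^2}\le Cc_j(t)2^{-\frac32 j}\|\partial_x U\|_{L^\infty}\|\partial_x U\|_{B^{1/2}_{2,1}}$. An $L^2$ energy estimate, together with the symmetry of $A(U)$ (which converts the transport term into $\tfrac12\int(\partial_x\overline S_{j-1}A(U))|\Delta_j U|^2\,dx$ and thereby produces the factor $\|\partial_x U\|_{L^\infty}$), gives after the $\varepsilon$-regularization of $\textbf{Step~2}$, multiplication by $2^{\frac32 j}$, summation over $j$, and the equivalence $\|\partial_x U\|_{B^{1/2}_{2,1}}\lesssim\|U\|_{B^{3/2}_{2,1}}$,
\[
\frac{d}{dt}\|U(t)\|_{B^{\frac32}_{2,1}}\le C\|\partial_x U(t)\|_{L^\infty}\|U(t)\|_{B^{\frac32}_{2,1}}+C\|\theta\|_{B^{\frac32}_{2,1}}.
\]
The Gronwall lemma (Lemma \ref{osgood}) then yields
\[
\|U(t)\|_{B^{\frac32}_{2,1}}\le\big(\|U_0\|_{B^{\frac32}_{2,1}}+Ct\|\theta\|_{B^{\frac32}_{2,1}}\big)\exp\Big(C\int_0^t\|\partial_x U(\tau)\|_{L^\infty}\,d\tau\Big).
\]

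Under the standing assumption $\int_0^{T^*}\|\partial_x U\|_{L^\infty}\,dt<\infty$ with $T^*<\infty$, the right-hand side is uniformly bounded on $[0,T^*)$, so $N:=\sup_{t<T^*}\|U(t)\|_{B^{3/2}_{2,1}}<\infty$. Since the existence time in Theorem \ref{them1} is determined, through \eqref{2.2}, only by $\|U_0\|_{B^{3/2}_{2,1}}$ and $\|\theta\|_{B^{3/2}_{2,1}}$, there is a uniform $\tau_0>0$ such that the solution issued from any datum of $B^{3/2}_{2,1}$-norm at most $N$ exists on a time interval of length $\tau_0$. Choosing $t_1<T^*$ with $T^*-t_1<\tau_0$ and restarting from $U(t_1)$ extends $U$ past $T^*$, contradicting the definition of $T^*$; hence $\int_0^{T^*}\|\partial_x U\|_{L^\infty}\,dt=\infty$.

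The main obstacle is the criticality of $B^{3/2}_{2,1}=B^{d/2+1}_{2,1}$: a crude commutator estimate would bound $R_j$ by a term of order $\|U\|_{B^{3/2}_{2,1}}^2$, i.e. of the same strength as the quantity being controlled, and Gronwall would then fail to close in terms of the $L^1_tL^\infty$ norm of $\partial_x U$. The decisive point is to use the sharp critical-case remainder estimate recorded in Lemma \ref{existence}, which isolates the factor $\|\partial_x U\|_{L^\infty}$ and leaves $\|U\|_{B^{3/2}_{2,1}}$ appearing only linearly; this is precisely what singles out $\int_0^{T^*}\|\partial_x U\|_{L^\infty}\,dt$ as the correct blow-up quantity. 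A minor technical care is to retain the $\varepsilon$-regularization when passing from the inequality for $\|\Delta_j U\|_{L^2}^2$ to the one for $\|\Delta_j U\|_{L^2}$, exactly as in $\textbf{Step~2}$.
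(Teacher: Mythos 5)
Your proposal is correct and follows essentially the same route as the paper: both apply $\Delta_j$ to the system, use the critical-case remainder bound of Lemma \ref{existence} to isolate the factor $\|\partial_x U\|_{L^\infty}$ while keeping $\|U\|_{B^{3/2}_{2,1}}$ linear, and conclude via Gronwall that $\|U(t)\|_{B^{3/2}_{2,1}}\le\bigl(\|U_0\|_{B^{3/2}_{2,1}}+Ct\|\theta\|_{B^{3/2}_{2,1}}\bigr)\exp\bigl(C\int_0^t\|\partial_x U\|_{L^\infty}\,d\tau\bigr)$, so that finiteness of $\int_0^{T^*}\|\partial_x U\|_{L^\infty}\,dt$ contradicts maximality of $T^*$. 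Your explicit continuation step (uniform local existence time depending only on the $B^{3/2}_{2,1}$ norms, restart from $t_1$ close to $T^*$) spells out what the paper leaves implicit in its final sentence.
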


\begin{proof}\
\quad From \eqref{eq1}, after a few computations, we can obtain
\begin{equation}\begin{aligned}
	\frac{1}{2}\frac{d}{dt}||\Delta_j U||^2_{L^{2}}\le C||\partial_x U||_{L^{\infty}}||\Delta_j U||^2_{L^{2}}+C||R_j||_{L^2}||\Delta_j U||_{L^{2}}+||\partial_x\Delta_j\theta||_{L^{2}}||\Delta_j U||_{L^{2}},
\end{aligned}\end{equation}
which can be written as
\begin{equation}\begin{aligned}
\frac{d}{dt}||\Delta_j U||_{L^{2}}\le Cc_j(t)2^{-\frac{3}{2}j}||\partial_x U||_{L^{\infty}}||\partial_x U(t)||_{B^{\frac{1}{2}}_{2,1}}+Cc_j(t)2^{-\frac{3}{2}j}||\partial_x\theta||_{B^{\frac{1}{2}}_{2,1}}.
\end{aligned}\end{equation}
Multiplying both sides by $2^{\frac{3}{2}j}$, summing over $j$ and applying Gronwall inequality, we can get that
\begin{equation}\label{equu1}\begin{aligned}
   \|U\|_{B^{\frac{3}{2}}_{2,1}}\le\exp\big(\int_0^t\|\partial_xU\|_{L^{\infty}}dt'\big)\bigg(\|U_0\|_{B^{\frac{3}{2}}_{2,1}}+C\|\theta\|_{B^{\frac{3}{2}}_{2,1}}t\bigg).
\end{aligned}\end{equation}
If $T^*$ is finite and $\int_0^{T^*}||\partial_xU||_{L^{\infty}}dt<\infty$, then from \eqref{equu1}, we have $||U(t)||_{B^{\frac{3}{2}}_{2,1}}<\infty$ for all $t\in[0,T^*)$, which contradicts the assumption that $T^*$ is the maximal existence time.
%By \eqref{2.2}, we observe that the maximal time of existence $T^*$ satisfies
%\begin{equation}\label{3.1}
%	T^*>\frac{c}{||U_0||_{B^{\frac{3}{2}}_{2,1}}+C_0||\theta||_{B^{\frac{3}{2}}_{2,1}}}.
%\end{equation}
%Let $W$ be the solution of the Cauchy problem for \eqref{2.1} with data $U(t)$ at time $t$. From uniqueness, we know that $W(\tau)=U(t+\tau)$ for $0\le t+\tau<T^*$ %so that the maximal time of existence for $W$ is $T^*-t$. Hence, we get
%\begin{equation*}
%	T^*-t>\frac{c}{||U_0||_{B^{\frac{3}{2}}_{2,1}}+C_0||\theta||_{B^{\frac{3}{2}}_{2,1}}}.
%\end{equation*}
%which we can written as
%\begin{equation*}
%	||U_0||_{B^{\frac{3}{2}}_{2,1}}+C_0||\theta||_{B^{\frac{3}{2}}_{2,1}}>\frac{c}{T^*-t}.
%\end{equation*}
%This implies that $||U_0||_{B^{\frac{3}{2}}_{2,1}}+C_0||\theta||_{B^{\frac{3}{2}}_{2,1}}$ does not remain bounded when $t\to T^*$.
%By the uniqueness, the solution $W$ extends the solution $U$ beyond $T^*$. This contradicts the fact that $T^*$ is the lifespan.
\end{proof}

\section{Local well-posedness for $0<\beta<1$}
In this section, we give the proof of Theorem \ref{them2}.

Firstly, we will prove the existence. Consider the sequence $(U^n)_{n\in\mathbb N}$ defined by $U^0=0$ and
\begin{equation}\label{4.1}
	\left\{\begin{array}{l}
	D^{\beta}_{t}U^{n+1}+A(U^n)\partial_{x}U^{n+1}=M(x),\\
	U^{n+1}|_{t=0}=S_{n+1}U_0.
	\end{array}\right.
\end{equation}
We also claim that some constant $C_0$ can be found such that
\begin{equation*} T<\min\bigg\{\beta^{\frac{1}{\beta}},\bigg(\frac{\beta \ln2}{2C_0\big(||U_0||_{B^{\frac{3}{2}}_{2,1}}+C_0||\theta||_{B^{\frac{3}{2}}_{2,1}}\big)}\bigg)^{1/\beta}\bigg\}\Longrightarrow||U^n||_{L^{\infty}([0,T];B^{\frac{3}{2}}_{2,1})}\le 2\big(||U_0||_{B^{\frac{3}{2}}_{2,1}}+C_0||\theta||_{B^{\frac{3}{2}}_{2,1}}\big).
\end{equation*}
Similar to the calculation of $\beta=1$, we get
\begin{equation*}
	\frac{1}{2}D_t^{\beta}||\Delta_j U^{n+1}||^2_{L^{2}}\le C||\nabla U^n||_{L^{\infty}}||\Delta_j U^{n+1}||^2_{L^{2}}+C||R_j^n||_{L^2}||\Delta_j U^{n+1}||_{L^{2}}+||\partial_x\theta||_{L^{2}}||\Delta_j U^{n+1}||_{L^{2}}.
\end{equation*}
Then as $B^{\frac{1}{2}}_{2,1}\hookrightarrow L^{\infty}$, we have
\begin{equation*}\begin{aligned}
		D_t^{\beta}||\Delta_j U^{n+1}||_{L^{2}}\le &C||U^n||_{B^{\frac{3}{2}}_{2,1}}||\Delta_j U^{n+1}||_{L^{2}}\\
		&+C c_j(t)2^{-\frac{3}{2}j}||U^n||_{B^{\frac{3}{2}}_{2,1}}||U^{n+1}||_{B^{\frac{3}{2}}_{2,1}}+Cc_j(t)2^{-\frac{3}{2}j}||\theta||_{B^{\frac{3}{2}}_{2,1}}.
\end{aligned}\end{equation*}
As $||\Delta_j U^{n+1}||_{L^{2}}\le c_j(t)2^{-\frac{3}{2}j}||U^{n+1}||_{B^{\frac{3}{2}}_{2,1}}$, we thus get
\begin{equation*}
	D_t^{\beta}||\Delta_j U^{n+1}||_{L^{2}}\le C c_j(t)2^{-\frac{3}{2}j}||U^n||_{B^{\frac{3}{2}}_{2,1}}||U^{n+1}||_{B^{\frac{3}{2}}_{2,1}}+Cc_j(t)2^{-\frac{3}{2}j}||\theta||_{B^{\frac{3}{2}}_{2,1}}.
\end{equation*}
By Lemma \ref{fractional}, we obtain
\begin{equation*}
	||\Delta_j U^{n+1}||^2_{L^{2}}\le ||\Delta_j U_0||^2_{L^{2}}+C 2^{-\frac{3}{2}j}\int_0^{t}\tau^{\beta-1} c_j(\tau)\big(||U^n||_{B^{\frac{3}{2}}_{2,1}}||U^{n+1}||_{B^{\frac{3}{2}}_{2,1}}+||\theta||_{B^{\frac{3}{2}}_{2,1}}\big)d\tau.
\end{equation*}
Multiply both sides by $2^{\frac{3}{2}j}$ and then sum over $j$ to obtain
\begin{equation*}\begin{aligned}
		||U^{n+1}||_{L_T^{\infty}(B^{\frac{3}{2}}_{2,1})}&\le\sum_j2^{\frac{3}{2}j}||\Delta_j U^{n+1}||_{L_T^{\infty}(L^{2})}\\
		&\le ||U_0||_{B^{\frac{3}{2}}_{2,1}}+C\int_0^{t}\tau^{\beta-1}\big(||U^n||_{B^{\frac{3}{2}}_{2,1}}||U^{n+1}||_{B^{\frac{3}{2}}_{2,1}}+||\theta||_{B^{\frac{3}{2}}_{2,1}}\big)d\tau.
\end{aligned}\end{equation*}
The Gronwall lemma infers that
\begin{equation*}\begin{aligned}
		||U^{n+1}||_{B^{\frac{3}{2}}_{2,1}}\le (||U_0||_{B^{\frac{3}{2}}_{2,1}}+C\int_{0}^{t}\tau^{\beta-1}||\theta||_{B^{\frac{3}{2}}_{2,1}}d\tau)\exp\big(C{\int_0^t||U^n||_{B^{\frac{3}{2}}_{2,1}}\tau^{\beta-1}d\tau}\big),
\end{aligned}\end{equation*}
then thanks to the induction hypothesis, for all $t\in[0,T]$, we have
\begin{equation*}\begin{aligned}
		||U^{n+1}||_{B^{\frac{3}{2}}_{2,1}}\le \bigg(||U_0||_{B^{\frac{3}{2}}_{2,1}}+\frac{C}{\beta}||\theta||_{B^{\frac{3}{2}}_{2,1}}T^{\beta}\bigg)\exp\bigg(\int_0^t{2C(||U_0||_{B^{\frac{3}{2}}_{2,1}}+||\theta||_{B^{\frac{3}{2}}_{2,1}})\tau^{\beta-1}d\tau}\bigg).
\end{aligned}\end{equation*}
Choosing $C_0\le C$, we obtain that
\begin{equation*}
	||U^{n+1}||_{L^{\infty}([0,T];B^{\frac{3}{2}}_{2,1})}\le 2\bigg(||U_0||_{B^{\frac{3}{2}}_{2,1}}+C_0||\theta||_{B^{\frac{3}{2}}_{2,1}}\bigg).
\end{equation*}
By the same token, we can show that $(U_n)_{n\in\mathbb N}$ is a Cauchy sequence in $L^{\infty}([0,T];L^2)$. From the Fatou property, we get that $U$ belongs to $L^{\infty}([0,T];B^{\frac{3}{2}}_{2,1})$. By virtue of  the equation \eqref{1.4}, we have $D^\beta_t U$ belongs to $L^{\infty}([0,T];B^{\frac{1}{2}}_{2,1})$. The property of $D^\beta_t$ ensure that $U\in \mathcal{C}([0,T];B^\frac{3}{2}_{2,1})\cap \mathcal{C}^{\beta}([0,T];B^{\frac 1 2}_{2,1})$.
Consequently, we finish the proof of Theorem \ref{them2}.  $\hfill\Box$
\begin{rema}
From the above proof, we can see that the lifespan $$T\approx [\frac{\beta}{(\|U_0\|_{B^\frac{3}{2}_{2,1}}+\|\theta\|_{B^\frac{3}{2}_{2,1}})}]^{\frac{1}{\beta}}.$$
Consider the function $f(x)=(\frac{x}{C})^\frac{1}{x}$ with $x\in(0,1]$. Directly calculation yields
$$f'(x)=(\frac{x}{C})^\frac{1}{x}\bigg[\frac{1}{x}\ln(\frac{x}{C})\bigg]'=(\frac{x}{C})^\frac{1}{x}\frac{1-\ln x+\ln C}{x^2}.$$
If $C\geq e^{-1}$, then $f'(x)>0$. If $C< e^{-1}$, then there exists a critical point $x_0$ such that $f'(x_0)=0$. Moreover, $x_0$ is the maximal point of $f$. If the initial data is large enough, then the lifespan increases monotonically with increasing $\beta$. On the other hands, if the initial data is small, then there exists a $\beta$ such that the lifespan is largest. Therefore, one can find the best time-fractional model to study about Tsunamis when the initial data is small.
\end{rema}

\section{Blow up phenomena}
In this section, we study about the blow up phenomena. Our proof is based on the symmetric structure and the sign preserve property of the \eqref{1.1}.\\
{\bf{Proof of theorem \ref{them3} :}}\\
Let $[0,T)$ be the maximal interval of existence of the solution. Using the condition that $u_0$ is odd, $\psi_0, \theta$ are even, one can check that the functions
$$\widetilde{u}(x,t):=-u(-x,t), \quad \widetilde{\psi}(x,t):=\psi(-x,t), \quad t\in[0,T), x\in\mathbb{R},$$
are also the solutions of \eqref{1.1} with initial data $(u_0,\psi_0)$. By uniqueness we conclude that $\widetilde{u}=u$ and $\widetilde{\psi}=\phi$ which means that $u(\cdot,t)$ is odd and $\psi(\cdot,t)$ is even. In particular, we get
$$u(0,t)=u_{xx}(0,t)=0, \quad t\in[0,T).$$
Note that $\psi$ is even and therefore $\psi_x(0,t)=0$. Using the condition $\theta(0)=0$ and plugging $x=0$ into the second equation of \eqref{1.1}, we have
$$D^\beta_t\psi(0,t)+\psi(0,t)\phi_x(0,t)=0.$$
Since the initial data $\psi_0(0)=0$, it follows that $\psi(0,t)=0$ for any $t\in[0,T)$.
Applying $\partial^2_x$ to \eqref{1.1} and taking $x=0$, yields that
$$D^\beta_t\psi_{xx}(0,t)+\psi_{xx}(0,t)\phi_x(0,t)+\theta_{xx}(0)u_x(0,t)=0,$$
where we use the fact that $\psi(0,t)=\psi_x(0,t)=\psi_{xxx}(0,t)=\theta(0)=0$. Notice that $u'_{0}(0)<0$, by the continuity, there exists a $\ep>0$ such that
$u_x(0,t) \leq 0 $ for any $t\in[0, \ep)$. Combining with the condition $\theta_{xx}(0)\geq 0 $, we deduce that
$$D^\beta_t\psi_{xx}(0,t)+\psi_{xx}(0,t)\phi_x(0,t)\leq 0, \quad \forall t\in[0,\ep).$$
 The condition $\psi''_0(0)\leq 0$ ensures that $\psi_{xx}(0,t)\leq 0$ for $t\in[0,\ep)$. Applying $\partial_x$ to the first equation of \eqref{1.1}, we have
$$D^\beta_tu_x+ u^2_x+uu_{xx}+g\psi_{xx}=0.$$
Plugging $x=0$ into the above equation yields that
\begin{align}
D^\beta_tu_x(0,t)\leq -u^2_x(0,t) \quad \forall t\in[0,\ep).
\end{align}
By the definition of $D^\beta_t$, we have $u_x(0,\ep)\leq u_0(0)<0$. By virtue of the continuity argument, one can prove that
\begin{align}
D^\beta_tu_x(0,t)\leq -u^2_x(0,t) \quad \forall t\in[0,T).
\end{align}
Solving the above inequality, we deduce that
$$ \frac{1}{u_x(0,t)}-\frac{1}{u_{0x}(0)}\geq t^\beta,$$
which leads to
$$u_x(0,t)\leq \frac{u_{0x}(0)}{1+u_{0x}(0)t^\beta}.$$
Since $u_{0x}(0)<0$, it follows that $\lim_{t\rightarrow T^*}u_x(0,t)=-\infty$ where $T^*=(-\frac{1}{u_{0x}(0)})^{\frac{1}{\beta}}$.
$\hfill\Box$
\begin{rema}
Indeed, one can show that under some suitable symmetric condition on initial data at the point $x_0$, the blow up phenomena may occur at the point $x_0$.
\end{rema}
Now, we give two numerical examples to present the blow up phenomena. For simplicity, we take $\theta=0$ and $\beta=\frac{1}{2}$. Both of them satisfy the condition in theorem \ref{them3}.
\begin{exam}Set {~$u_0=-xe^{-x^2};\ \psi_0=0.02\times (cos(x/2+\pi)+1)$.} In this example, one can see that the solution will blow up at the point $x=0$ in finite time.\begin{figure}[!h]
       \begin{minipage}[t]{0.5\linewidth} % 图片占一行宽度0.5
               \centering
               \includegraphics[width=7.8cm,height=2.3cm]{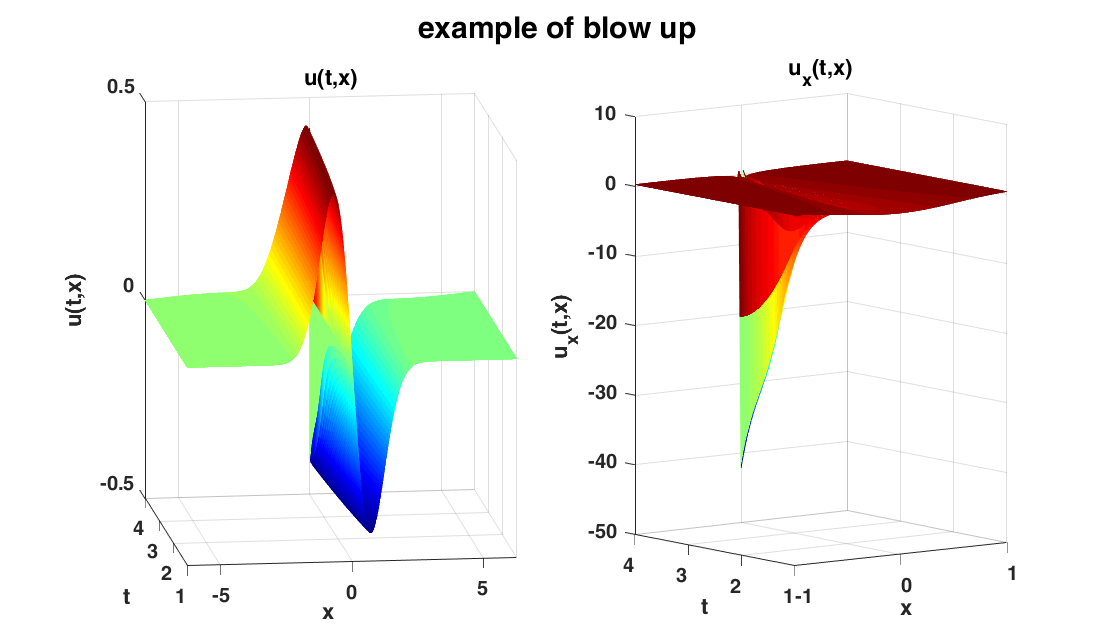}
               \caption{$u$ in example 1 of blow up}
        \end{minipage}
        \begin{minipage}[t]{0.5\linewidth} %图片占用一行宽度的50%
            \hspace{2mm}%微调2mm
            \includegraphics[width=7.8cm,height=2.3cm]{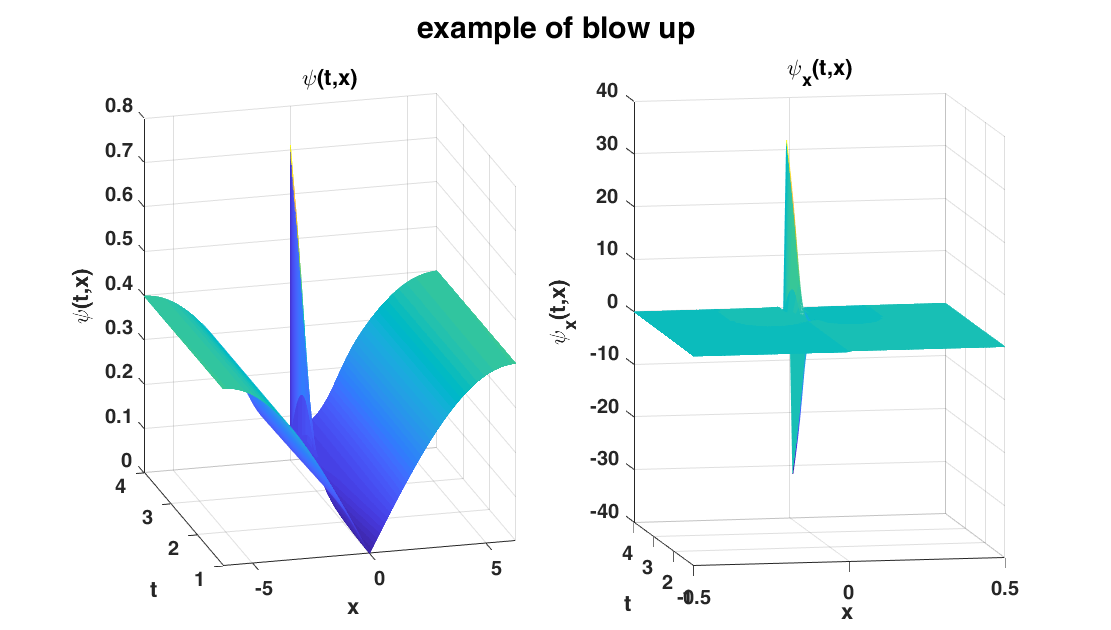}
            \caption{$\psi$ in example 1 of blow up}
         \end{minipage}
     \end{figure}\end{exam}
      \begin{exam}Set {~$u_0=\begin{cases}
      -\frac{2x}{(1-x^2)^2}e^{-\frac{1}{1-x^2}},~\text{for}~|x|\le1,\\
      0,~~~~~~~~~~~~~~~~~~~\text{for}~|x|>1, \
     \end{cases} \psi_0=0.02\times (cos(x/2+\pi)+1).
     $} In this example, we find that the solution will blow up at the point $x$ close to $0.6105$. This means that the blow up will occur before the time $T^*$ which we obtain in the proof of theorem \ref{them3}. Also, the example shows that the time fractional model \eqref{1.1} will lead to shock wave.
     \begin{figure}[h]
       \begin{minipage}[t]{0.5\linewidth} % 图片占一行宽度0.5
               \centering
               \includegraphics[width=7.8cm,height=2.3cm]{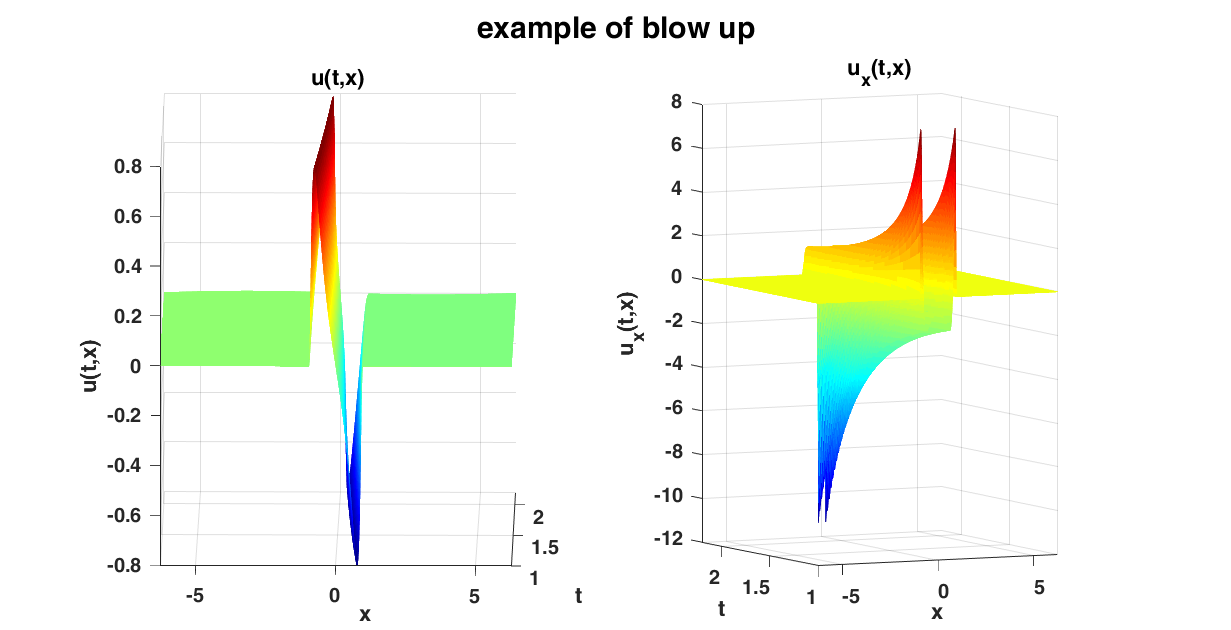}
               \caption{$u$ in example 2 of blow up}
        \end{minipage}
        \begin{minipage}[t]{0.5\linewidth} %图片占用一行宽度的50%
            \hspace{2mm}%微调2mm
            \includegraphics[width=7.8cm,height=2.3cm]{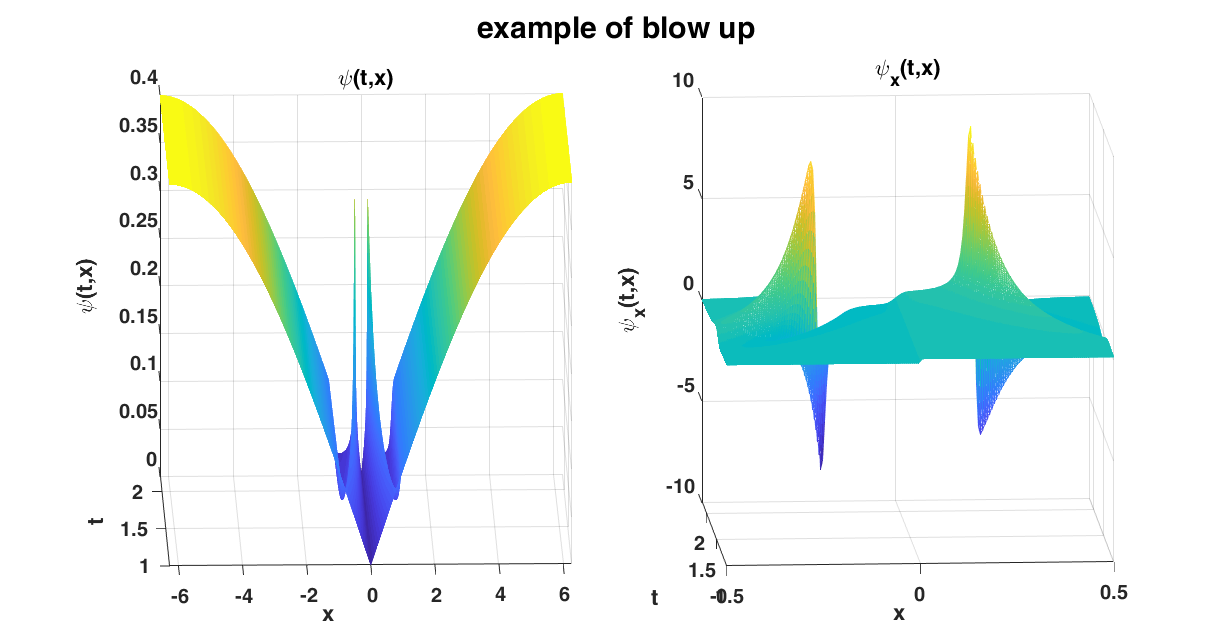}
            \caption{$\psi$ in example 2 of blow up}
         \end{minipage}
\end{figure}\end{exam}
\

\smallskip
\noindent\textbf{Acknowledgments} This work was
partially supported by the National key R\&D Program of China (2021YFA1002100), the National Natural Science Foundation of China (No.12171493), and the Natural Science Foundation of Guangdong province (No. 2021A1515010296 and 2022A1515011798 ).

\end{document}